\documentclass{amsart}       

\usepackage{amsmath}
\usepackage{latexsym,amssymb,dsfont}
\usepackage{thmtools,mathtools,amsthm}
\usepackage[T1]{fontenc}
\usepackage[utf8]{inputenc}
\usepackage{tikz-cd}
\usepackage{quiver}
\usepackage{url}
\usepackage{extpfeil}
\usepackage{extarrows}

\usepackage[style=numeric,maxnames=10,backend=biber,
useprefix=true,hyperref=true]{biblatex}
\addbibresource{refs.bib}

\theoremstyle{definition}
\newtheorem{definition}{Definition}[section]

\theoremstyle{theorem}
\newtheorem{theorem}[definition]{Theorem}

\newtheorem{proposition}[definition]{Proposition}
\newtheorem{corollary}[definition]{Corollary}

\usepackage[mathscr]{euscript}

\usepackage{mymacros_snature}

\usepackage[bookmarksdepth=2,pdfencoding=unicode,colorlinks=true]{hyperref}
\hypersetup{allcolors=[rgb]{0.1,0.1,0.4}}
\usepackage{cleveref}

\begin{document}

\title[Generalized Chevalley criteria in simplicial homotopy type theory]{Generalized Chevalley criteria in simplicial homotopy type theory}

\author[Jonathan Weinberger]{Jonathan Weinberger}
\address{Johns Hopkins University, Department of Mathematics, 3400 N Charles St, 21218 Baltimore, MD, USA}
\email{jweinb20@jhu.edu}
\date{\today}

\keywords{Chevalley criterion, relative adjunction, absolute left lifting diagram, cocartesian fibration, Segal space, Rezk space, simplicial type theory, homotopy type theory}

\subjclass{03B38, 18N50, 18N60, 18N45, 55U35, 18D30}


\maketitle

\begin{abstract}
	We provide a generalized treatment of (co)cartesian arrows, fibrations, and functors. Compared to the classical conditions, the endpoint inclusions get replaced by arbitrary shape inclusions. Our framework is Riehl--Shulman's simplicial homotopy type theory which supports the development of synthetic internal $\inftyone$-category theory.
\end{abstract}

\section{Introduction}\label{sec:intro}

We study formal conditions on cells, fibrations, and fibered functors, generalizing from the well-known theory of cocartesian fibrations.\footnote{Our work could be completely dualized for \emph{cartesian} fibrations, using \emph{right} adjoint right inverse adjunctions instead.} We work in the setting of Riehl--Shulman's simplicial homotopy type theory, which provides a framework for synthetic $\inftyone$-category theory, amenable to computer formalization~\cite{KRW-Yon,KudRzk,sHoTT}. Our conditions generalize the \emph{Chevalley conditions} that traditionally have been used to characterize fibrations internally to $2$-categories~\cite{StrYon,GrayFib,LorRieCatFib}. Riehl--Verity in their $\infty$-cosmos theory have extended these to $\infty$-categories~\cite{RV21}, but this is based on traditional, set-theoretic foundations. We work syntactically in a directed version of homotopy type theory (HoTT)~\cite{hottbook,RijIntro}. By its established semantics~\cite{RS17,Shu19,riehl:2024,riehl-ug,Wei-StrExt} this directed type theory has semantics in internal $\infty$-toposes~\cite{RV21,AFfib,BarwickShahFib,rasekh2021cartesian,MWInt,MarCocart}.

Our work makes precise how characterization theorems for cocartesian arrows, fibrations, and functors are formal consequences from their characterizations via left adjoint right inverse conditions. In the extensive studies of cocartesian (and two-sided) fibrations~\cite{RV21} and \cite{BW21,W22-2sCart}, many of the relevant closure properties are also formal consequences of them being defined via Chevalley (or more generally LARI~\cite[Corollary~6.3.8 and Proposition~6.3.1.4]{RV21}) conditions. The related concept of \emph{relative colimits} has been studied by Lurie~\cite[\S4.3.1]{LurHTT}.

This might provide consequences or inspirations for other type-theoretic frameworks in which notions of fibrations based on axiomatically given shapes have been studied~\cite{CCHM2018,OP16,WL19,RH23}.

\subsection*{Acknowledgments}

I am grateful for financial support by the US Army Research Office under MURI Grant W911NF-20-1-0082. Part of the work was supported by the National Science Foundation under Grant No.~DMS-1928930 while I participated in a program supported by the Simons Laufer Mathematical Sciences Institute (SLMath, formerly known as MSRI). The program was held in the summer of 2022 in partnership with the Universidad Nacional Aut\'{o}noma de M\'{e}xico.

This text is a slight extension of Appendix~A of my PhD thesis~\cite{jw-phd}. I am grateful to Ulrik Buchholtz, Emily Riehl, and Thomas Streicher for important discussions, steady guidance, and fruitful collaborations, during the work on my thesis, resp., the time since. Furthermore, I would like to thank Mathieu Anel, Tim Campion, Sina Hazratpour, and Maru Sarazola for interesting discussions and feedback.

\section{Preliminaries}

\subsection{Martin-Löf type theory}


In its foundation, we are working within a dependent type theory, whose basic entities are dependent types or \emph{families} $\Gamma \vdash A$, where
\[ \Gamma \defeq [x_1 : A_1, x_2 : A_2, \ldots, x_n : A_n]\]
is a \emph{context} capturing dependency on free variables. Dependent terms
\[ x_1 : A_1, x_2 : A_2, \ldots, x_n : A_n \vdash a : A \]
are also called \emph{sections} of the family $A$. A dependent type in the empty context $[ \cdot ]$ is just a (constant) type $\cdot \vdash A$. Locality of type theory justifies that we can restrict to working over the empty context.

Martin-Löf type theory comes with $\Sigma$-types and $\Pi$-types. These are both type formers for types $A$ and families $x : A \vdash B(x)$. The $\Sigma$- or \emph{dependent pair} type $\sum_{a:A} B(a)$ consists of pairs $\pair{a}{b}$ with $a:A$ and $b:B(a)$. In the case that $B$ is a constant type, we can identify $\sum_{a:A} B$ with the cartesian product $A \times B$. The $\Pi$- or \emph{dependent function} type $\prod_{a:A} B(a)$ has as terms the sections $a:A \vdash f(a) : B(a)$. if $B$ is a constant type, then $\prod_{a:A} B(a)$ is the same as the ordinary function type $A \to B$.

For any type $A$ and terms $x,y : A$ we have the \emph{identity type} $(x =_A y)$, whose inhabitants $p : (x =_A y)$ can be thought of as \emph{paths} from $x$ to $y$. This notion of \emph{propositional equality} is modeled after Leibniz's law of \emph{identity of discernibles}, which says that equal objects share the same logical properties. But it is also in line with the homotopical interpretation of Martin-Löf type theory~\cite{AW05}. Any $x : A$, by the introduction rule of identity types, gives rise to a canonical self-loop $\refl_x : (x =_A x)$ (introduction rule for identity types). The elimination rule of identity type says that reflexivity inductively generates the family
\[ x : A, y : A \vdash (x =_A y). \]
This principle can be understood as a version of the Yoneda lemma, and in the directed setting of simplicial type theory one can show that directed versions hold as well (for functorial type families). Furthermore, by path induction, any type gives rise to a family of iterated identity types satisfying the $\infty$-groupoid laws. 

\subsection{Univalence axiom}

We assume the presence of sufficiently many universes. Our constructions can make do with a single fixed universe of that hierarchy, notated $\UU$. We furthermore assume Veovodsky's \emph{univalence axiom}, postulating an equivalence between paths in the universe and \emph{weak equivalences} (\aka~bi-invertible maps). One consequence is that we get, for any small type $A:\UU$, an equivalence between the type of $\UU$-small maps into $A$, and the type of \emph{families} $A \to \UU$. Thus, $\UU$-small dependent types in context $A$ are the same as maps with codomain $A$, and a in fact, they can always taken to be of the form $\pr_1 : \sum_{a:A} B(a) \to A$. This is called \emph{fibrant replacement} or \emph{projection equivalence}. It can be seen as a type-theoretic straightening/unstraightening construction~\cite[Theorem~2.5.1]{BW21}. Concretely, weak equivalence between types is defined as follows. A map $f : A \to B$ is a \emph{(weak) equivalence} if and only if the proposition
\[ \isEquiv(f) \defeq \sum_{g : B \to A} (g \circ f =_{A \to A} \id_A ) \times \sum_{h : B \to A} (f \circ h =_{B \to B} \id_B ) \]
is an equivalence. The type of equivalences from $A$ to $B$ is defined as
\[ A \simeq B \defeq \sum_{f:A \to B} \isEquiv(f).\]
If there exists some equivalence from $A$ to $B$, we might abbreviate this by just writing $A \simeq B$.

\subsection{The homotopy theory of types}

This type theory recalled here in a nutshell has a standard interpretation into Kan complexes which are a model for $\infty$-groupoids. The $(\infty,1)$-category of $\infty$-groupoids forms an $\infty$-topos, and in fact, as shown by Shulman~\cite{Shu19} any $\infty$-topos admits a model structure that gives rise to a model of HoTT. Thus HoTT can be seen as a synthetic theory of $\infty$-groupoids \aka~homotopy types. The notions we are about to discuss next are due to Voevodsky. It will be important to distinguish the types that are homotopically trivial, \ie, \emph{contractible}. Given a type $A$, we say that it is contractible if and only if the type
\[ \isContr(A) \defeq \sum_{x:A} \prod_{y:A} (x=_Ay) \]
is inhabited. A contractible type $A$ comes with a \emph{center of contraction} $c_A : A$ and a \emph{contracting homotopy} $H_A : \prod_{y:A} (c_A =_A y)$. Contractible types are equivalent to the point or \emph{terminal type} $\unit$. Another important class of are the \emph{propositions}. A type $A$ is a proposition if and only if the type
\[ \isProp(A) \defeq \prod_{x,y:A} (x=_Ay) \]
is inhabited. One can show that $\isProp(A) \simeq A \to \isContr(A)$. The intuition is that the propositions are exactly those types that describe a \emph{property} rather than (higher) structure. If we know that $A$ is a true proposition, we can canonically give an inhabitant without any choice of higher data involved. For instance, one can show that $\isProp(\isEquiv(f))$, so it is a property of a map being a weak equivalence. Another important insight of Veovodsky's was that being an equivalence is equivalent to all the \emph{fibers} being contractible. Let $f : A \to B$ be a map between types and $b:B$ be an element. Then the \emph{fiber} of $f$ at $b$ is defined as
\[ \fib(f,b) \defeq \sum_{a:A} (f(a) =_B b). \]
Voevodsky showed that
\[ \isEquiv(f) \simeq \prod_{b:B} \isContr\big(\fib(f,b)\big).\]
By univalence, any type family $P : A \to \UU$ can be understood as a \emph{fibration} $B \to A$ (where $B \simeq \sum_{a:A} P(a)$), so an equivalence is exactly a \emph{trivial fibration}, all of whose fibers are contractible. All of these notions have reasonable translations to their expected semantic counterparts, allowing for doing homotopy theory synthetically. Homotopy theory is concerned with the study of homotopy types, which can be modeled as $\infty$-groupoids.

\subsection{Simplicial homotopy type theory}

To be able to capture synthetic $\infty$-categories, we want to augment standard HoTT by two kinds of new structures, both analogous to cubical type theory~\cite{CCHM2018,OP16}. 

\paragraph{Simplicial shapes}
 This is done by adding in a directed, bi-pointed interval, together with all its cartesian powers. From these, one can carve out familiar shapes like the $n$-simplices $\Delta^n$, its boundaries $\partial \Delta^n$, and the horns $\Lambda_k^n$. Importantly for us, these shapes are constructed to be sets, \ie, $0$-types, or come from a separate strict (pre-)type layer. An extensive account of this is~\cite[Subsection~3.2]{RS17}. In our setting, it is convenient (and in accordance with the intended models) to assume the shapes to be fibrant types, too, but we can always strictify them, as needed, \cf~\cite[Section~2.4]{BW21}.

\paragraph{Extension types}
As another gadget, Riehl--Shulman add in \emph{extension types}: for any shape inclusion $\Phi \hookrightarrow \Psi$ and type $\Gamma \vdash A$, we want to fix a \emph{partial} section $a : \prod_{\Phi \times \Gamma} A$. Then, we want to reify all the \emph{judgmental}, \ie, strict extensions of $a$ into the \emph{extension type}
\[ \exten{\Psi}{A}{\Phi}{a},\]
\ie, the types of $\exten{\Psi}{A}{\Phi}{a}$ are sections $b : \prod_{\Psi \times \Gamma} A$ such that
\[ t : \Phi \vdash a(t) \jdeq b(t).\]
We will assume function extensionality both for strict shapes~\cite[Subsection~4.4]{RS17} and homotopical types. This allows us to prove a de-/strictification equivalence between strict extension types and their homotopical analogues, see~\cite[Subsection~2.4]{BW21}.

\subsection{Synthetic \texorpdfstring{$\inftyone$}{(∞,1)}-category theory}

Using the simplicial extensions to our type theory, we can define for any type $A$ and terms $x,y:A$ the \emph{hom type} or \emph{directed arrow type} as
\[ \hom_A(x,y) \defeq (x \to_A y) \defeq \ndexten{\Delta^1}{A}{\partial \Delta^1}{[x,y]}.\]
A type is \emph{Segal} or a \emph{synthetic pre-$\inftyone$-category} if and only if the restriction map $A^{\Delta^2} \to A^{\Lambda_1^2}$ is a weak equivalence. This means, the type $A$ perceives any pair of composable arrows already as the full $2$-simplex, which exactly means that $A$ supports composition of arrows, uniquely up to homotopy. To obtain \emph{synthetic $\infty$-categories}, we have to add the \emph{Rezk completeness} or \emph{local univalence} condition to the Segal type $A$. Rezk completeness says that the canonical comparison $(x =_A y) \to (x \cong_A y)$ (which is defined by path induction, sending $\refl_x$ to $\id_x \defeq \lambda t.x$) is an equivalence. Here, $(x \cong_A y)$ is the type of bi-invertible arrows in the sense of the hom-type defined above.

After introducing these notions, Riehl--Shulman show~\cite[Section~5 and 6]{RS17} that synthetic $\inftyone$-categories behave in many of the expected ways. The theory of functors and natural transformations is particularly nice: any map between (complete) Segal types automatically is a \emph{functor} in that it preserves compositions. The type of natural transformations between a fixed pair of functors can also be defined as a hom type. Riehl--Shulman also develop a comprehensive theory of homotopy coherent adjunctions in this setting~\cite[Section~11]{RS17}, later complemented by fibered and left adjoint right inverse (LARI) adjunctions~\cite[Appendix~B]{BW21}. Bardomiano Mart\'{i}nez~\cite{BM22} has developed (co)limits in this setting.

\subsection{Synthetic fibered \texorpdfstring{$\inftyone$}{(∞,1)}-category theory}

In this type-theoretic setting of synthetic $\inftyone$-category theory, we are particularly interested in reasoning about various kinds of (functorial) families $B \to \UU$, which we can equivalently capture by notions of \emph{fibrations} $E \fibarr B$. In~\cite{RS17}, Riehl--Shulman study discrete covariant fibrations. This has served as the basis to extend the study to cocartesian fibrations~\cite{BW21}, two-sided cartesian fibrations~\cite{W22-2sCart}, Beck--Chevalley and lextensive fibrations~\cite{Wei-IntSum}, and exponentiable fibrations~\cite{BM22}. A fundamental concept for all these is the \emph{dependent} analogue of the directed arrow type: let $B$ be a type with $a,b : B$, and an arrow $u:(a \to_B b)$. We consider a family $P:B \to \UU$ with terms $d:P\,a$ and $e:P\,b$. The type of \emph{dependent arrows} over $u$ from $d$ $e$ is given by 
\[ \dhom_u^P(d,e) \defeq (d \to^P_u e) \defeq \exten{t:\Delta^1}{P(u(t))}{\partial \Delta^1}{[d,e]}.\] 
We often want to have the family $P:B \to \UU$ in consideration to be functorial, meaning any arrow $u:a \to_B b$ induces a functor $u_! : P\,a \to P\,b$.

A standing assumption for the families in question is that they should be \emph{isoinner}, meaning that the base, the total type, and all the fibers should be Rezk types. This is a reasonable baseline for developing notions of fibered categories. A systematic discussion is to be found in~\cite[Section~4]{BW21}.

This framework and our work therein also draws many inspirations from Riehl--Verity's $\infty$-comos theory~\cite{RV21}, which is another approach to synthetic, model-independent formal $\infty$-category theory, based on traditional foundations. 

\section{Relative adjunctions}\label{ssec:reladj}

We provide a brief treatment of \emph{relative adjunctions} in the sense of Ulmer~\cite{UlmDense}, \cf~also~\cite[Exercise~2.11]{loregian_2021}, \cite[Definition~2.6]{MasarykFormal}. This takes up on a suggestion by Emily Riehl to development a more formal account to cocartesian arrows, or more generally, LARI cells in simplicial homotopy type theory after the analogous results in $\infty$-cosmos theory~\cite{RV21} by Riehl--Verity. As a payoff, we will see that the Chevalley condition defining the LARI cells implies the Chevalley condition for LARI fibrations in the sense of~\cite{BW21}, and likewise for LARI functors. 

\begin{definition}[Transposing relative adjunction]
	Let $A,B,C$ be Rezk types and $(g: C \to A \leftarrow B: f)$ a cospan. A \emph{(transposing) left relative adjunction} of $f$ and $g$ consists of a functor $\ell:C \to B$ together with a fibered equivalence
	\[ \big(\comma{\ell}{B} \equiv_{C \times B} \comma{g}{f}\big) \simeq \prod_{\substack{c:C \\ b:B}} \hom_B(\ell\,c, b) \simeq \hom_A(g\,c, f\,b). \]
	Given such data, we call $\ell$ a \emph{(transposing) left adjoint of $f$ relative to $g$} or \emph{(transposing) $g$-left adjoint of $f$}.
\end{definition}

In case $C \jdeq A$ and $g \jdeq \id_A$ one obtains the usual notion of (transposing) adjunction. There also exists a relative analogue of the units. We might occasionally drop the predicate ``left'' in our discussion since we will only consider the left case. But note that relative adjunctions are a genuinely asymmetric notion.

\begin{definition}[Relative adjunction via units]\label{def:reladj-units}
	Let $A,B,C$ be Rezk types and $(g: C \to A \leftarrow B: f)$ a cospan. A \emph{(transposing) left relative adjunction} consists of a functor $\ell:C \to B$ together with a natural transformation $\eta: g \Rightarrow_{C \to A} f\ell$, called \emph{relative unit}, such that the transposition map
	\[ \Theta_\eta \defeq \lambda b,c,k.fk \circ \eta_c : \comma{\ell}{B} \to_{C \times B} \comma{g}{f} \]
	is a fiberwise equivalence.  
\end{definition}

By the characterizations about type-theoretic weak equivalences, \Cref{def:reladj-units} translates to:
\begin{align}\label{eq:reladj-units}
	\prod_{\substack{c:C \\b:B}} \prod_{m:gc \to_A fb} \isContr\Big( \sum_{k:\ell\,c \to_B b} \Theta_\eta(k) = m \Big)
\end{align}
Diagrammatically, this can be depicted as follows, demonstrating once more the generalization from the usual notion of adjunction:
\[\begin{tikzcd}
	{g\,c} && {f\,b} & b \\
	{f\,\ell c} &&& {\ell\,c}
	\arrow["{\forall\,m}", from=1-1, to=1-3]
	\arrow["{\eta_c}"', from=1-1, to=2-1]
	\arrow["{f\,k}"', from=2-1, to=1-3]
	\arrow["{\exists! \,k}", dashed, from=2-4, to=1-4]
\end{tikzcd}\]
It turns out that also in the synthetic setting we recover the equivalence of relative adjunctions with \emph{absolute left lifting diagrams (ALLD)}, whose universal property in terms of pasting diagrams can be (informally or analytically) described as follows.
A lax diagram
\[\begin{tikzcd}
	&& B \\
	C && A
	\arrow[""{name=0, anchor=center, inner sep=0}, "g"', from=2-1, to=2-3]
	\arrow["f", from=1-3, to=2-3]
	\arrow[""{name=1, anchor=center, inner sep=0}, "\ell", from=2-1, to=1-3]
	\arrow["\eta"', shorten <=2pt, shorten >=2pt, Rightarrow, from=0, to=1]
\end{tikzcd}\]
is an absolute lifting diagram if and only if any given lax square on the left factors uniquely as a pasting diagram as demonstrated below left:
\[\begin{tikzcd}
	X && B & {} & X && B \\
	C && A & {} & C && A
	\arrow["\gamma"', from=1-1, to=2-1]
	\arrow["g"', from=2-1, to=2-3]
	\arrow["\beta", from=1-1, to=1-3]
	\arrow["f", from=1-3, to=2-3]
	\arrow["{=}"{description}, draw=none, from=1-4, to=2-4]
	\arrow["\gamma"', from=1-5, to=2-5]
	\arrow[""{name=0, anchor=center, inner sep=0}, "g"', from=2-5, to=2-7]
	\arrow[""{name=1, anchor=center, inner sep=0}, "\beta", from=1-5, to=1-7]
	\arrow["f", from=1-7, to=2-7]
	\arrow["{\forall\,\mu}"{description}, shorten <=10pt, shorten >=10pt, Rightarrow, from=2-1, to=1-3]
	\arrow[""{name=2, anchor=center, inner sep=0}, from=2-5, to=1-7]
	\arrow["{\exists!\,\mu'}", shorten <=2pt, shorten >=2pt, Rightarrow, from=2, to=1]
	\arrow["\eta", shift right=5, shorten <=2pt, shorten >=2pt, Rightarrow, from=0, to=2]
\end{tikzcd}\]
Accordingly we define this type-theoretically\footnote{For a first discussion about lax squares and pasting diagrams in sHoTT~\cf~\cite[Appendix~A]{BW21}. We do currently not have a systematic account to these. \Eg~certainly at some point a pasting theorem \`{a} la~\cite{infty2pasting} would be most desirable. This would presumably require a categorical universe validating a directed univalence principle, and possibly also modalities from cohesion.} as follows:
\begin{definition}[Absolute left lifting diagram]
	A diagram
	\[\begin{tikzcd}
		&& B \\
		C && A
		\arrow[""{name=0, anchor=center, inner sep=0}, "g"', from=2-1, to=2-3]
		\arrow["f", from=1-3, to=2-3]
		\arrow[""{name=1, anchor=center, inner sep=0}, "\ell", from=2-1, to=1-3]
		\arrow["\eta"', shorten <=2pt, shorten >=2pt, Rightarrow, from=0, to=1]
	\end{tikzcd}\]
	is an \emph{absolute lifting diagram (ALLD)} if the following proposition is satisfied:
	\[ \isALLD_{\ell,f,g}(\eta) \defeq \prod_{\substack{X:\UU \\ \beta:X \to B \\ \gamma: X \to C}} \prod_{\mu:g\gamma \Rightarrow f\beta} \isContr \Big(\sum_{\mu': \ell \gamma \Rightarrow \beta} \prod_{x:X} (f\mu_x' \circ \eta_{\gamma\,x} =_{g\gamma\,x \to f\beta\,x} \mu_x)  \Big)\] 
\end{definition}
Note that one can infer the data $\angled{\ell,f,g}$ from $\eta$ alone. We might also speak of $\eta$ as an \emph{absolute left lifting cell}.

Diagrammatically, the demanded identity of morphisms reads:
\[\begin{tikzcd}
	{g(\gamma\,x)} && {f\ell(\gamma\,x)} && {f(\beta\,x)}
	\arrow["{\eta_{\gamma\,x}}", from=1-1, to=1-3]
	\arrow["{f\mu_x'}", from=1-3, to=1-5]
	\arrow[""{name=0, anchor=center, inner sep=0}, "{\mu_x}"', curve={height=24pt}, from=1-1, to=1-5]
	\arrow[shorten >=2pt, Rightarrow, no head, from=1-3, to=0]
\end{tikzcd}\]

The above definitions can be dualized to obtain relative \emph{right} adjoints and absolute \emph{right} lifting diagrams. Because of the inherent asymmetry some analogies to the case of ordinary adjunctions are missing (such as the presence of \emph{both} units and counits). However, we can still provide a characterization result.

\begin{theorem}[Characterizations of relative left adjunctions, \cf~\protect{\cite[Thm.~3.5.8/3]{RV21}}, \protect{\cite[Thm.~11.23]{RS17}}, \protect{\cite[Thm.~B.1.4]{BW21}}]\label{thm:reladj-char}
	Let $A,B,C$ be Rezk types and $(g: C \to A \leftarrow B: f)$ a cospan. Then the following types are equivalent propositions:
	\begin{enumerate}
		\item\label{it:reladj-transp} The type $\sum_{\ell:C \to B} \comma{\ell}{B} \equiv_{C \times B} \comma{g}{f}$ of (transposing) $g$-left adjoints of $f$.
		\item\label{it:reladj-unit} The type $\sum_{\ell:C \to B} \sum_{\eta:g \Rightarrow f\ell} \isEquiv\big( \Theta_\eta \big)$ with $\Theta$ as in~\Cref{def:reladj-units}.
		\item\label{it:reladj-alld} The type $\sum_{\ell:C \to B} \sum_{\eta:g \Rightarrow f\ell} \isALLD_{\ell,f,g}(\eta)$ of completions of the cospan consisting of $f$ and $g$ to an ALLD.
	\end{enumerate}
\end{theorem}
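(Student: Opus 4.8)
The plan is to bypass a preliminary propositionality argument and instead exhibit explicit equivalences of the three total types, deducing at the end that they are propositions from the fact that one of them is. The conceptual engine for the equivalence of types~\ref{it:reladj-transp} and~\ref{it:reladj-unit} is the observation that the transposition $\Theta$ of \Cref{def:reladj-units} is nothing but the directed Yoneda map. I would first unfold the fibered equivalence in~\ref{it:reladj-transp} into a fibered map $\phi\colon\comma{\ell}{B}\to\comma{g}{f}$ over $C\times B$ together with a proof $\isEquiv(\phi)$, and observe that, for fixed $c\colon C$, the fibration $\comma{\ell}{B}$ restricts to the covariant corepresentable $b\mapsto\hom_B(\ell\,c,b)$, while $\comma{g}{f}$ restricts to $b\mapsto\hom_A(g\,c,f\,b)$, which is covariant because $f$ is a functor and corepresentables are covariant (\cite{RS17}).

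Applying the covariant Yoneda lemma of~\cite{RS17} fiberwise in $c$, i.e.\ underneath $\prod_{c:C}$, identifies the type of such fibered maps with $\prod_{c:C}\hom_A(g\,c,f\ell\,c)$; by function extensionality for extension types this product is exactly the type of natural transformations $\eta\colon g\Rightarrow f\ell$, natural transformations into a type being pointwise. The inverse Yoneda map sends $\eta$ to $k\mapsto f\,k\circ\eta_c$, that is, to $\Theta_\eta$; hence $\phi$ corresponds to a unique $\eta$ with $\phi=\Theta_\eta$, and $\isEquiv(\phi)$ transports across to $\isEquiv(\Theta_\eta)$. Summation over $\ell$ then yields the equivalence of~\ref{it:reladj-transp} with~\ref{it:reladj-unit}. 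It is worth noting that this is also where naturality of the comparison is used: the identification forces the fibered equivalence in~\ref{it:reladj-transp} to be compatible with the covariant transport, which is what ultimately makes that type propositional.

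For the equivalence of~\ref{it:reladj-unit} and~\ref{it:reladj-alld} I would fix $\ell$ and $\eta$ and compare the two predicates on $\Theta_\eta$. By Voevodsky's fiberwise criterion $\isEquiv(\Theta_\eta)$ is equivalent to contractibility of all fibers of $\Theta_\eta$, which unfolds precisely to~\eqref{eq:reladj-units}, namely the instance of $\isALLD_{\ell,f,g}(\eta)$ at $X\defeq\unit$. Conversely, I would reduce an arbitrary test datum $(X,\beta,\gamma,\mu)$ to this pointwise case: a natural transformation $\mu'\colon\ell\gamma\Rightarrow\beta$ is, by function extensionality, a family in $\prod_{x:X}\hom_B(\ell\gamma\,x,\beta\,x)$, so distributivity of $\sum$ over $\prod$ rewrites $\sum_{\mu'}\prod_{x:X}(f\mu_x'\circ\eta_{\gamma\,x}=\mu_x)$ as $\prod_{x:X}\sum_{k}(f\,k\circ\eta_{\gamma\,x}=\mu_x)$, a product of fibers of $\Theta_\eta$. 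Each factor is contractible by~\eqref{eq:reladj-units}, and a product of contractible types is contractible, so $\isALLD_{\ell,f,g}(\eta)$ holds. Thus $\isEquiv(\Theta_\eta)\simeq\isALLD_{\ell,f,g}(\eta)$, and congruence of $\sum$ over $\ell$ and $\eta$ gives~\ref{it:reladj-unit}$\,\simeq\,$\ref{it:reladj-alld}.

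It remains to show the types are propositions, which I expect to be the main obstacle and where uniqueness of relative adjoints enters. I would argue on~\ref{it:reladj-unit}, where the datum $\eta$ is forced to be a genuine (hence, on the $\isEquiv$-locus, invertible) natural transformation, so that spurious pointwise comparisons cannot occur. Given two completions $(\ell,\eta)$ and $(\ell',\eta')$, the composite $\Theta_{\eta'}^{-1}\circ\Theta_\eta$ is a natural fibered equivalence $\comma{\ell}{B}\simeq\comma{\ell'}{B}$, whence directed Yoneda produces an isomorphism $\ell\,c\cong_B\ell'\,c$ natural in $c$, and Rezk completeness promotes it to an identification $\ell=\ell'$; transporting $\eta$ along this path and using that $\isEquiv$ and $\isALLD$ are propositions closes the contractibility argument. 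Propositionality of~\ref{it:reladj-unit} then transfers to~\ref{it:reladj-transp} and~\ref{it:reladj-alld} along the equivalences above. The parts I expect to cost the most are the coherent bookkeeping in this last transport (matching the equivalence data after identifying $\ell$ with $\ell'$) and, earlier, pinning down the directed-Yoneda identification so that it returns exactly $\Theta$; both steps rely essentially on $A$ and $B$ being Segal (resp.\ Rezk) and on function extensionality for extension types.
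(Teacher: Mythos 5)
Your route coincides with the paper's in its essentials: the equivalence of \Cref{it:reladj-transp} and \Cref{it:reladj-unit} via the covariant Yoneda lemma (the paper defers this to the argument of \cite[Theorem~11.23]{RS17}), and the equivalence of \Cref{it:reladj-unit} and \Cref{it:reladj-alld} by instantiating at $X \jdeq \unit$ in one direction and reducing a general test datum $(X,\beta,\gamma,\mu)$ to the pointwise condition \eqref{eq:reladj-units} in the other. Your $\Sigma$--$\Pi$ distributivity step plus ``a product of contractible types is contractible'' is exactly what the paper's ``reindexing'' footnote compresses, and you are right that this commutation is needed on top of the bare reindexing equivalence; on this point your write-up is more explicit than the paper's.

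The gap is in the propositionality argument. Having produced $\ell = \ell'$ from the fibered equivalence $\Theta_{\eta'}^{-1}\circ\Theta_\eta$ via Yoneda and Rezk completeness, you claim that transporting $\eta$ along this path and ``using that $\isEquiv$ and $\isALLD$ are propositions'' closes the contractibility argument. It does not: the unit is data, not a property, so after transport you still hold two units $\bar\eta, \eta' : g \Rightarrow f\ell'$ whose transposition maps are both equivalences, and propositionality of $\isEquiv$/$\isALLD$ says nothing about identifying $\bar\eta$ with $\eta'$. This missing step is precisely what the paper isolates as a separate lemma (again following \cite[Theorem~11.23]{RS17}): for fixed $\ell$, the type $\sum_{\eta} \isEquiv(\Theta_\eta)$ is a proposition. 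Alternatively, within your own setup it can be closed by unwinding the Yoneda identification: the isomorphism it produces is $j_c \defeq \Theta_{\eta'}^{-1}(\eta_c)$, which by construction satisfies $f j_c \circ \eta'_c = \eta_c$, and that equation is exactly the statement that transport along the induced path $\ell = \ell'$ carries $\eta$ to $\eta'$. With either of these repairs, your reorganization --- equivalences of the total types first, propositionality last, with \Cref{cor:unique-left-rel-adj} effectively absorbed into the proof --- is a viable alternative to the paper's order of play; without one of them, the final step fails.
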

 
\begin{proof}
	In parts we can work analogously as in the proof of \cite[Theorem~11.23]{RS17}. In particular, an equivalence between the types from~\Cref{it:reladj-transp} and~\Cref{it:reladj-unit} follows\footnote{The classical version is due to~\cite[Lemma~2.7]{UlmDense}, and it works by the analogous argument.} just as in \emph{loc.~cit.} by using the (covariant discrete) Yoneda Lemma~\cite[Section~9, and (11.9)]{RS17}. Next, analogously as in the proof of \cite[Theorem~11.23]{RS17} one also shows that, given $\ell:C \to B$, the type $\sum_{\eta:g \Rightarrow f\ell} \isEquiv\big( \Theta_\eta \big)$ is a proposition.
	
	We now fix $\ell:C \to B$ and $\eta:g \Rightarrow f\ell$. Recall~\eqref{eq:reladj-units}. The direction from~\Cref{it:reladj-alld} to~\Cref{it:reladj-unit} follows by setting $X \jdeq \unit$. Conversely, we see that we get from~\Cref{it:reladj-unit} to~\Cref{it:reladj-alld} by ``reindexing'' Condition~\eqref{eq:reladj-units} along any given span $(\gamma : C \leftarrow X \rightarrow B:\beta)$.\footnote{More precisely, we use the fact that, given a family of propositions $P:A \to \Prop$, there is an equivalence $\Phi:\prod_{a:A} P(a) \simeq \prod_{\substack{X:\UU \\ \alpha:X \to A}} \prod_{x:X} P(\alpha \, x):\Psi$. We can take $\Phi(\sigma) \defeq \lambda X,\alpha,x.\sigma(\alpha\,x)$ and $\Psi(\tau) \defeq \lambda a.\tau(\unit)(a)(\pt)$.}
\end{proof}

\begin{corollary}\label{cor:unique-left-rel-adj}
	Given a cospan $(g:C \to A \leftarrow B:f)$, if both $\ell, \ell':C \to B$ are left adjoints to $f$ relative to $g$, then there is an identity $\ell = \ell'$:
	\[\begin{tikzcd}
		&& B \\
		C && A
		\arrow[""{name=0, anchor=center, inner sep=0}, "g"', from=2-1, to=2-3]
		\arrow[""{name=1, anchor=center, inner sep=0}, "f", from=1-3, to=2-3]
		\arrow[""{name=2, anchor=center, inner sep=0}, "\ell"{description}, curve={height=6pt}, from=2-1, to=1-3]
		\arrow[""{name=3, anchor=center, inner sep=0}, "{\ell'}"{description}, curve={height=-12pt}, from=2-1, to=1-3]
		\arrow[shorten <=3pt, shorten >=3pt, Rightarrow, no head, from=2, to=3]
		\arrow["\eta"{description}, shorten <=6pt, shorten >=6pt, Rightarrow, from=0, to=1]
	\end{tikzcd}\]
\end{corollary}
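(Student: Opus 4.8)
The plan is to deduce the statement formally from the propositionality built into \Cref{thm:reladj-char}. That theorem asserts, among other things, that the type
\[ T \defeq \sum_{\ell:C \to B} \comma{\ell}{B} \equiv_{C \times B} \comma{g}{f} \]
of transposing $g$-left adjoints of $f$ is a \emph{proposition}. The whole corollary is then an instance of the general fact that a proposition has at most one inhabitant up to identity, so almost nothing beyond bookkeeping remains to be done.

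First I would unpack the two hypotheses into inhabitants of $T$. Saying that $\ell$ (resp.\ $\ell'$) is a left adjoint of $f$ relative to $g$ means precisely that we are handed a fibered equivalence $e:\comma{\ell}{B} \equiv_{C\times B}\comma{g}{f}$ (resp.\ $e':\comma{\ell'}{B}\equiv_{C\times B}\comma{g}{f}$). Hence $p\defeq\pair{\ell}{e}$ and $p'\defeq\pair{\ell'}{e'}$ are both terms of $T$. Next, since $T$ is a proposition, $\isProp(T)$ supplies a path $q:(p=_T p')$. Acting on $q$ by the first projection $\pr_1:T\to(C\to B)$ produces a path $\pr_1(p)=_{C\to B}\pr_1(p')$, and as $\pr_1(p)\jdeq\ell$ and $\pr_1(p')\jdeq\ell'$ hold judgmentally, this is exactly the desired identity $\ell=\ell'$.

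I do not expect a genuine obstacle here, since the essential work was already discharged in establishing that $T$ is a proposition (rather than merely having propositional fibers over each fixed $\ell$). The one point deserving a remark is that the corollary only claims an identity of the underlying functors, and that the double arrow drawn in the diagram is this identity $\ell=\ell'$ viewed through Rezk completeness of $C\to B$ as a natural isomorphism; if one wanted the identification to respect the relative units as well, this too would follow by transporting the equivalence data $e,e'$ along $q$, but that finer statement is not required for the claim as stated.
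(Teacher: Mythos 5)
Your proof is correct and matches the paper's intent: the corollary is stated without proof precisely because it is the immediate consequence of \Cref{thm:reladj-char} asserting that the type of transposing $g$-left adjoints is a proposition, which is exactly the fact you invoke before applying $\pr_1$ to the resulting path. Your parenthetical remark that propositionality of the whole $\Sigma$-type (not merely of the fibers over a fixed $\ell$) is what is needed is also the right observation, and it is indeed what the theorem's statement provides.
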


We write $\relAdj{\ell}{g}{f}$ if it exists.

\begin{definition}[Relative LARI adjunction]
	A relative left adjunction is called \emph{relative LARI adjunction} if its relative unit is invertible.
\end{definition}

\section{LARI cells, fibrations, and functors}\label{sec:lari-stuff}

\subsection{LARI cells}\label{ssec:lari-cells}

For this section, we fix the following data. Let $j: \Phi \hookrightarrow \Psi$ be a shape inclusion. Let $B$ be a Rezk type and $P:B \to \UU$ be an isoinner family. For its unstraightening $\pi: E \fibarr B$ the diagram induced by exponentiation is given through: 
\[\begin{tikzcd}[sep=1.5ex]
	{\mathllap{E^\Psi \equiv} \sum_{\pair{u}{f}:\Phi^E} \sum_{v:\ndexten{\Psi}{B}{\Phi}{u}} \exten{\Psi}{v^*P}{\Phi}{f}} &&&& {E^\Phi \simeq \sum_{u:\Phi \to B} \mathrlap{\prod_\Phi u^*P}} \\
	\\
	{\mathllap{B^\Psi \equiv \sum_{u:\Phi \to B}} \ndexten{\Psi}{B}{\Phi}{u}} &&&& {B^\Phi}
	\arrow[from=1-1, to=3-1]
	\arrow[from=3-1, to=3-5]
	\arrow[from=1-1, to=1-5]
	\arrow[from=1-5, to=3-5]
\end{tikzcd}\]
An element $v:\Psi \to B$ is to be understood as \emph{$\Psi$-shaped cell} (or \emph{diagram}) in the type $B$. A section $g:\prod_{t:\Psi} P(v(t))$ is a \emph{dependent $\Psi$-shaped cell (over $v$)} in the family $P$.

\begin{definition}[$j$-LARI cell]\label{def:lari-cell}
	Let $g: \Psi \to E$ be a $\Psi$-shaped cell in $E$, lying over $\angled{u,v,f}$ with $u:\Phi \to B$, $v:\Psi \to B$, and $f:\Phi \to E$ (both the latter lying over $u$). We call $g$ a \emph{$j$-LARI cell} if the ensuing canonical commutative diagram\footnote{By some slight abuse of notation $g$ really stands for the whole tuple $\angled{u,v,f,g}$, and the homotopy is reflexivity. This is a valid reduction due to fibrant replacement.}
	\[ 
	\begin{tikzcd}
		&& {E^\Psi} \\
		\unit && {B^\Psi \times_{B^\Phi} E^\Psi}
		\arrow["g", from=2-1, to=1-3]
		\arrow[""{name=0, anchor=center, inner sep=0}, "{\angled{u,v,f}}"', from=2-1, to=2-3]
		\arrow["{\pi' \mathrlap{\defeq j \cotens \pi}}", from=1-3, to=2-3]
		\arrow[shorten <=15pt, shorten >=15pt, Rightarrow, no head, from=1-3, to=0]
	\end{tikzcd}
	\]
	is an absolute left lifting diagram,~\ie~there is a relative adjunction as encoded by the fibered equivalence
	\[ \comma{g}{E^\Psi} \simeq_{E^\Psi} \comma{\angled{u,v,f}}{\pi'},\]
	or, equivalently by~\Cref{thm:reladj-char}
	\begin{align}\label{eq:lari-cell}
		\isLARICell_{j}^P(g) \defeq \isEquiv\big( \Theta_{\refl} \big).
	\end{align}
\end{definition}
where the transposition map $\Theta_\refl$ simply projects the data of a morphism $\beta$ in $E^\Psi$ onto its part in $E^\Phi \times_{B^\Phi} B^\Psi$.

We can re-express this using the following notion.

\begin{definition}[Pushout product]\label{def:po-prod}
	Let $j: Y \to X$ and $k: T \to S$ each be type maps or shape inclusions. The \emph{Leibniz tensor of $j$ and $k$} (or \emph{pushout product}) is defined as the following cogap map:
	\[\begin{tikzcd}[sep=4ex]
		{Y} & {T} & {Y \times S\bigsqcup_{Y \times T} X \times T} && {Y \times T} && {Y \times S} \\
		{X} & {S} & {X \times S} && {X \times T} && {\cdot} \\
		&&&&&&& {X \times S}
		\arrow["{j}"{name=0, swap}, from=1-1, to=2-1]
		\arrow["{k}"{name=1, swap}, from=1-2, to=2-2, swap]
		\arrow["{j\widehat{\otimes} k}"{name=2}, from=1-3, to=2-3]
		\arrow[from=1-5, to=2-5]
		\arrow[from=2-5, to=2-7]
		\arrow[from=1-5, to=1-7]
		\arrow[from=1-7, to=2-7]
		\arrow[from=2-5, to=3-8, curve={height=12pt}]
		\arrow[from=1-7, to=3-8, curve={height=-12pt}]
		\arrow["{j \widehat{\otimes}k}" description, from=2-7, to=3-8, dashed]
		\arrow["\lrcorner"{very near start, rotate=180}, from=2-7, to=1-5, phantom]
		\arrow[Rightarrow, "{\widehat{\otimes}}" description, from=0, to=1, shorten <=5pt, shorten >=5pt, phantom, no head]
		\arrow[Rightarrow, "{\defeq}" {description,pos=0.25}, from=1, to=2, shorten <=9pt, shorten >=9pt, phantom, no head]
	\end{tikzcd}\]
\end{definition}

In particular, recall from~\cite[Theorem 4.2]{RS17}, the explicit formula for the pushout product of two shape inclusions:

\[\begin{tikzcd}
	{\{t:I\,|\,\varphi\}} & {\{s:J\,|\,\chi\}} & {\{ \langle t,s\rangle : I \times J \, | \, (\varphi \land \zeta) \lor (\psi \land \chi)\}} \\
	{\{t:I\,|\,\psi\}} & {\{s:J\,|\,\zeta\}} & {\{ \langle t,s\rangle : I \times J \, | \, \psi \land \zeta\}} \\
	{}
	\arrow[""{name=0, inner sep=0}, from=1-1, to=2-1, hook]
	\arrow[""{name=1, inner sep=0}, from=1-2, to=2-2, hook]
	\arrow[""{name=2, inner sep=0}, from=1-3, to=2-3, hook]
	\arrow[Rightarrow, "{\widehat{\otimes}}" description, from=0, to=1, shorten <=7pt, shorten >=7pt, phantom, no head]
	\arrow[Rightarrow, "{\defeq}"  {description,pos=0.25}, from=1, to=2, shorten <=13pt, shorten >=13pt, phantom, no head]
\end{tikzcd}\]

Now, Condition~\ref{eq:lari-cell} becomes
\begin{align}\label{eq:lari-cell-explicit} \isLARICell_{j}(g) \equiv \prod_{\substack{\angled{r,w,k,m}:\Psi \to E \\ \alpha:\angled{u,v,f} \to \angled{r,w,k}}} \isContr\left(\exten{\pair{t}{s}:\Delta^1 \times \Psi} {P\big(\alpha_1(t,s)\big)} {b_1 \poprod j}{[\pair{g}{m},\alpha_2]}\right),
\end{align}\label{eq:enough-lari-cell}
where we denote by $b_1:\partial \Delta^1 \hookrightarrow \Delta^1$ the boundary inclusion, and $\alpha \jdeq \angled{\alpha_1, \alpha_2, \alpha_3}$ consists of morphisms:
\begin{align*}
	\alpha_1 & : \hom_{\Phi \to B}(u,r), \\
	\alpha_2 & : \ndexten{(\Delta^1 \times \Psi)}{B}{b_1 \poprod j}{[\pair{v}{w},\alpha_1]}, \\
	\alpha_3 & : \prod_{\pair{t}{s}:\Delta^1 \times \Phi} P\big(\alpha_1\,t\,s \big)
\end{align*}
Intuitively, this means that the given data $\angled{\alpha,g}$ can be uniquely lifted as indicated in~\Cref{fig:lari-cell}.

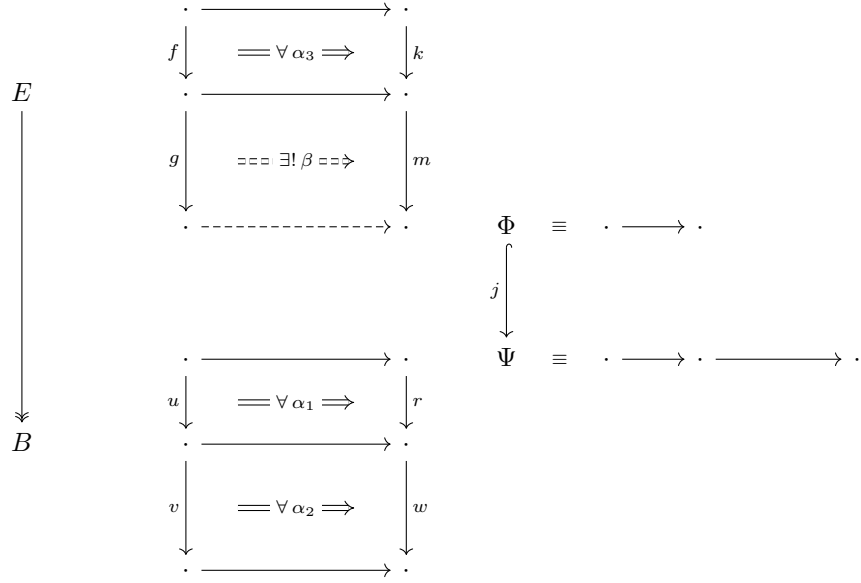
\begin{figure}
	\begin{tikzcd}
		&& \cdot &&& \cdot \\
		E && \cdot &&& \cdot \\
		\\
		&& \cdot &&& \cdot & \Phi & \cdot & \cdot \\
		\\
		&& \cdot &&& \cdot & \Psi & \cdot & \cdot && \cdot \\
		B && \cdot &&& \cdot \\
		\\
		&& \cdot &&& \cdot
		\arrow[from=1-3, to=1-6]
		\arrow[""{name=0, anchor=center, inner sep=0}, "f"', from=1-3, to=2-3]
		\arrow[from=2-3, to=2-6]
		\arrow[""{name=1, anchor=center, inner sep=0}, "k", from=1-6, to=2-6]
		\arrow[""{name=2, anchor=center, inner sep=0}, "g"', from=2-3, to=4-3]
		\arrow[dashed, from=4-3, to=4-6]
		\arrow[""{name=3, anchor=center, inner sep=0}, "m", from=2-6, to=4-6]
		\arrow[""{name=4, anchor=center, inner sep=0}, "u"', from=6-3, to=7-3]
		\arrow[from=7-3, to=7-6]
		\arrow[from=6-3, to=6-6]
		\arrow[""{name=5, anchor=center, inner sep=0}, "r", from=6-6, to=7-6]
		\arrow[""{name=6, anchor=center, inner sep=0}, "v"', from=7-3, to=9-3]
		\arrow[from=9-3, to=9-6]
		\arrow[""{name=7, anchor=center, inner sep=0}, "w", from=7-6, to=9-6]
		\arrow["j"', hook, from=4-7, to=6-7]
		\arrow[from=4-8, to=4-9]
		\arrow[from=6-8, to=6-9]
		\arrow[from=6-9, to=6-11]
		\arrow[two heads, from=2-1, to=7-1]
		\arrow["\jdeq"{description}, draw=none, from=4-7, to=4-8]
		\arrow["\jdeq"{description}, draw=none, from=6-7, to=6-8]
		\arrow["{\exists! \, \beta}"{description}, shorten <=19pt, shorten >=19pt, Rightarrow, dashed, from=2, to=3]
		\arrow["{\forall\, \alpha_3}"{description}, shorten <=19pt, shorten >=19pt, Rightarrow, from=0, to=1]
		\arrow["{\forall \, \alpha_2}"{description}, shorten <=19pt, shorten >=19pt, Rightarrow, from=6, to=7]
		\arrow["{\forall \, \alpha_1}"{description}, shorten <=19pt, shorten >=19pt, Rightarrow, from=4, to=5]
	\end{tikzcd}
	\caption{Universal property of $j$-LARI cells (schematic illustration)}
	\label{fig:lari-cell}
\end{figure}

\subsection{LARI fibrations}\label{ssec:lari-fibs}

\begin{definition}[$j$-LARI family]\label{def:enough-lari-lifts}
	Let $P:B \to \UU$ be an isoinner family over a Rezk type $B$, and $j:\Phi \hookrightarrow \Psi$ a shape inclusion $j$-LARI lift. We call $P$ a \emph{$j$-LARI family} if $P$ has \emph{enough $j$-LARI lifts}, meaning the following type is inhabited:
	\[ \prod_{u:\Phi \to B} \prod_{v:\ndexten{\Psi}{B}{\Phi}{u}} \prod_{f:\prod_{t:\Phi} P(u(t))} \sum_{g:\exten{t:\Psi}{P(v(t))}{\Phi}{f}} \mathrm{isLariCell}^P_j(g) \] 
\end{definition}

By~\Cref{thm:reladj-char}, in fact $\sum_{g:\ldots} \isLARICell_{j}(g)$ is a proposition. Given $\angled{u,v,f}$, we denote the arrow part from the center of contraction of this type, occuring in~(\ref{eq:enough-lari-cell}), as
\[ P_!(u,v,f) \defeq g_{u,v,f} :\exten{\Psi}{v^*P}{\Phi}{f}, \]
generalizing from cocartesian families.\footnote{We could also add the inclusion $j:\Phi \hookrightarrow \Psi$ as an annotation, but this is not necessary here since we will only deal with one such inclusion at a time.}
Similarly, for $g\jdeq P_!(u,v,f)$ we denote the ensuing ``filling'' data from~\Cref{eq:lari-cell-explicit} by
\[ \tyfill_g(\alpha,m) : \exten{\pair{t}{s}:\Delta^1 \times \Psi} {P\big(\alpha_1(t,s)\big)} {b_1 \poprod j}{[\pair{g}{m},\alpha_2]}. \]

The following generalizes the classical \emph{Chevalley condition}~\cite{StrYon,GrayFib,RV21} to arbitrary shape inclusions.
\begin{theorem}[$j$-LARI families via enough $j$-LARI lifts]\label{thm:lari-fams-lifting}
	Let $B$ be a Rezk type, $P: B \to \UU$ be an isoinner family, and denote by $\pi: E \to B$ the associated projection map. Then $P$ has enough $j$-LARI lifts if and only if it is a $j$-LARI family,~\ie~the Leibniz cotensor map $\pi' \defeq i_0 \cotens \pi: E^\Psi \to E^\Phi \to_{B^\Phi} B^\Psi$ has a left adjoint right inverse:
	\[\begin{tikzcd}
		{E^{\Psi}} & {} \\
		&& {E^\Phi \times_{B^\Phi} B^\Psi} && {E^\Phi} \\
		&& {B^{\Psi}} && {B^\Phi}
		\arrow[two heads, from=2-3, to=3-3]
		\arrow["{B^j}"', from=3-3, to=3-5]
		\arrow[from=2-3, to=2-5]
		\arrow["{\pi^\Phi}", two heads, from=2-5, to=3-5]
		\arrow["\lrcorner"{anchor=center, pos=0.125}, draw=none, from=2-3, to=3-5]
		\arrow["{E^j}", shift left=2, curve={height=-18pt}, from=1-1, to=2-5]
		\arrow[""{name=0, anchor=center, inner sep=0}, "\chi"', curve={height=12pt}, dotted, from=2-3, to=1-1]
		\arrow[""{name=1, anchor=center, inner sep=0}, "{\pi'}"', curve={height=12pt}, from=1-1, to=2-3]
		\arrow["{\pi^\Psi}"', shift right=2, curve={height=18pt}, two heads, from=1-1, to=3-3]
		\arrow["\dashv"{anchor=center, rotate=-117}, draw=none, from=0, to=1]
	\end{tikzcd}\]
\end{theorem}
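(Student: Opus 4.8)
The plan is to read the statement off from the relative-adjunction characterization of \Cref{thm:reladj-char}, applied to the cospan $(\id_{\mathcal C} : {\mathcal C} \to {\mathcal C} \leftarrow E^\Psi : \pi')$ where ${\mathcal C} \defeq E^\Phi \times_{B^\Phi} B^\Psi$. Since a relative left adjoint with $g \jdeq \id$ is just an ordinary left adjoint, an ALLD completing this cospan is precisely a left adjoint $\chi$ to $\pi'$; the extra right-inverse/invertible-unit data distinguishing a \emph{LARI} will be supplied by the fact that $j$-LARI lifts sit judgmentally over their underlying data. The whole content then reduces to matching the \emph{pointwise} lifting condition ``enough $j$-LARI lifts'' with the \emph{global} condition $\isALLD_{\chi,\pi',\id}(\eta)$, which is exactly the $X \jdeq \unit$ reindexing already performed in the proof of \Cref{thm:reladj-char}.

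For the forward direction I would first unfold \Cref{def:enough-lari-lifts}: enough $j$-LARI lifts yields, for each $c \jdeq \angled{u,v,f} : {\mathcal C}$, a lift $g_c : E^\Psi$ with $\pi'(g_c) \jdeq c$ together with a witness that $g_c$ is a $j$-LARI cell. As $\sum_{g} \isLARICell_j^P(g)$ is a proposition (recorded after \Cref{def:enough-lari-lifts}), this assignment is canonical, so by currying and function extensionality it assembles into a single map $\chi : {\mathcal C} \to E^\Psi$; since ${\mathcal C}$ and $E^\Psi$ are Rezk (the relevant exponentials and pullbacks of the isoinner data again being Rezk), every map between them — in particular $\chi$ — is automatically a functor. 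By construction $\pi' \circ \chi \jdeq \id_{\mathcal C}$, so $\chi$ is a section of $\pi'$ and the candidate unit $\eta : \id_{\mathcal C} \Rightarrow \pi'\chi$ is reflexivity, hence invertible — this is what will turn the adjunction into a LARI rather than a mere left adjoint.

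The crux is to promote the pointwise $j$-LARI cell property of the $g_c$ to the global statement $\isALLD_{\chi,\pi',\id}(\eta)$. Here I would invoke the same reindexing as in \Cref{thm:reladj-char}: the ALLD condition is a product over all $X : \UU$, $\beta : X \to E^\Psi$, $\gamma : X \to {\mathcal C}$ and $\mu : \gamma \Rightarrow \pi'\beta$ of a contractibility, and because the integrand is a proposition this product is equivalent, via the equivalence $\prod_{a:A} Q(a) \simeq \prod_{X:\UU}\prod_{\alpha:X\to A}\prod_{x:X} Q(\alpha\,x)$ for families of propositions $Q$, to its $X \jdeq \unit$ instance. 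That instance is precisely the family over $c : {\mathcal C}$ of the lifting properties defining the cells $g_c$. Concretely, matching the comma object $\comma{g}{E^\Psi}$ against the extension type of \eqref{eq:lari-cell-explicit} — in which the data $\angled{\alpha_1,\alpha_2,\alpha_3}$ is exactly a morphism out of $\pi'(g)$ in ${\mathcal C}$ and the pushout product $b_1 \poprod j$ encodes the boundary constraint — shows the two propositions coincide on the nose. \Cref{thm:reladj-char} then upgrades $\eta$ to the adjunction $\chi \dashv \pi'$, and its invertible unit makes it a LARI.

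For the converse, a LARI $\chi \dashv \pi'$ comes by \Cref{thm:reladj-char} with an absolute left lifting cell $\eta$; restricting along each point $c : \unit \to {\mathcal C}$ (the same reindexing, read backwards) exhibits $\chi(c)$ as a $j$-LARI cell, while right-invertibility $\pi'\chi \jdeq \id$ ensures it lies over $c$, so $P$ has enough $j$-LARI lifts. I expect the only genuinely delicate point to be the bookkeeping in the crux step: verifying that the general ALLD data $\angled{X,\beta,\gamma,\mu}$ specialized at $X \jdeq \unit$ reproduces the tuple $\angled{\alpha_1,\alpha_2,\alpha_3}$ and the pushout-product extension type of \eqref{eq:lari-cell-explicit}, so that ``pointwise ALLD for $\pi'$'' and ``$j$-LARI cell'' are literally the same proposition. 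Everything else is a formal consequence of \Cref{thm:reladj-char} together with this propositional reindexing equivalence.
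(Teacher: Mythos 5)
Your proposal is correct, and its skeleton matches the paper's: the same candidate LARI $\chi \defeq \lambda u,v,f.\angled{u,v,f,P_!(u,v,f)}$, the observation that it is a strict section of $\pi'$ so the unit can be taken to be reflexivity (hence invertible), the reduction of the global adjunction to the pointwise $j$-LARI-cell conditions, and a converse by restricting along points $\unit \to E^\Phi \times_{B^\Phi} B^\Psi$ with the same ``wlog strict section'' move. The execution of the crux differs, though. The paper establishes the transposing equivalence by hand: it exhibits explicit quasi-inverses, the projection $F(\alpha,\beta) \defeq \alpha$ and $G(\gamma) \defeq \pair{\gamma}{\tyfill_{P_!(u,v,f)}(\gamma,m)}$, with the single nontrivial identification $\beta = \tyfill_{P_!(u,v,f)}(\alpha,m)$ supplied by the contractibility in~\eqref{eq:lari-cell-explicit}. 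You instead invoke \Cref{thm:reladj-char} twice --- once at each point $c$, where by \Cref{def:lari-cell} the ALLD/unit-form condition is literally $\isLARICell_j^P(\chi\,c)$, and once for the global cospan $(\id,\pi')$ --- and then identify the two resulting unit-form contractibility statements, which are indeed the same proposition. Both arguments are sound. Yours is the more formal one, squarely in the spirit of the paper's ``formal consequences'' theme, and it avoids constructing any quasi-inverse; the paper's hands-on construction has the side benefit of producing the explicit counit/filling description that is reused later in the proof of \Cref{thm:char-lari-fun}, cf.~\eqref{eq:counit-lari-fib}, which your route would have to extract separately. One point to phrase carefully: the propositional reindexing equivalence by itself does not convert the global ALLD integrand (contractibility of a type of natural transformations) into a pointwise product of contractibilities; that step also needs the pointwise computation of natural transformations and distributivity of $\Pi$ over $\Sigma$. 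Since this is exactly the content of the unit-form/ALLD equivalence proved in \Cref{thm:reladj-char}, which you cite, nothing is actually missing --- just make sure you lean on that theorem rather than on the reindexing footnote alone.
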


\begin{proof}
	Assume $P:B \to \UU$ is an isoinner family with enough $j$-LARI lifts. The gap map can be taken as the strict projection
	\[ \pi' \defeq \lambda u,v,f,g.\angled{u,v,f}: E^{\Psi} \to E^\Phi \times_{B^\Phi} B^\Psi.\]
	For the candidate LARI we take the map that produces the $j$-LARI lift, \ie
	\[ \chi \defeq \lambda u,v,f.\angled{u,v,f,P_!(u,v,f)} : E^\Phi \times_{B^\Phi} B^\Psi \to E^\Psi.\]
	This is by definition a (strict) section of $\pi'$.
	
	For $\angled{u,v,f}:E^\Phi \times_{B^\Phi} B^\Psi$ and $\angled{r,w,k,m} : E^{\Psi}$ we define the maps
	\[\begin{tikzcd}
		{\hom_{E^\Psi}(\chi(u,v,f), \angled{r,w,k,m})} && {\hom_{E^\Phi \times_{B^\Phi} B^\Psi}(\angled{u,v,f},\angled{r,w,k})}.
		\arrow["F_{\angled{u,v,f},\angled{r,w,k,m}}", shift left=2, from=1-1, to=1-3]
		\arrow["G_{\angled{u,v,f},\angled{r,w,k,m}}", shift left=2, from=1-3, to=1-1]
	\end{tikzcd}\]
	defined by\footnote{We decompose morphisms in $E^\Psi$ as pairs $\pair{\alpha}{\beta}$ where $\alpha$ denotes the part in $E^\Phi \times_{B^\Phi} B^\Psi$ and $\beta$ is the given $\Psi$-shaped cell in $P$ lying over.}
	\[ F_{\angled{u,v,f},\angled{r,w,k,m}}(\alpha,\beta) \defeq \alpha, \quad G_{\angled{u,v,f},\angled{r,w,k,m}}(\gamma) \defeq \pair{\gamma}{\tyfill_{P_!(u,v,f)}(\gamma,m)} \]
	are quasi-inverse to one another.\footnote{For brevity, we shall henceforth leave the fixed parameters $\angled{u,v,f},\angled{r,w,k,m}$ implicit.}
	
	Clearly, $G$ is a section of the projection $F$ since for a morphism $\gamma$ we find
	\[ F(G(\gamma)) = F(\gamma,\tyfill_{P_!(u,v,f)}(\gamma,m)) = \gamma.\]
	
	For a morphism $\pair{\alpha}{\beta}$ in the opposing transposing morphism space we obtain
	\[ G(F(\alpha,\beta)) = G(\alpha) = \pair{\alpha}{\tyfill_{P_!(u,v,f)}(\alpha,m)}\]
	where we obtain an identification $\beta = \tyfill_{P_!(u,v,f)}(\alpha,m)$ (over $\refl_\alpha$) because of the universal property~\Cref{eq:lari-cell-explicit}. 
	
	This suffices to show that $\chi \dashv \pi'$ is a LARI adjunction as claimed.
	
	Conversely, suppose $\chi$ is a given LARI of $\pi'$, \wlogg~a strict section. This gives, for any data $\angled{u,v,f}:E^\Phi \times_{B^\Phi} B^\Psi$ we obtain a (strictly) commutative triangle:
	\[\begin{tikzcd}
		&& {E^\Psi} \\
		\unit && {B^\Psi \times_{B^\Phi} E^\Psi}
		\arrow["{\chi(u,v,f)}", from=2-1, to=1-3]
		\arrow[""{name=0, anchor=center, inner sep=0}, "{\angled{u,v,f}}"', from=2-1, to=2-3]
		\arrow["{\pi' }", from=1-3, to=2-3]
		\arrow[shorten <=15pt, shorten >=15pt, Rightarrow, no head, from=1-3, to=0]
	\end{tikzcd}\]
	
	Moreover, for all $\angled{r,w,k,m} : E^\Psi$, the map
	\[\begin{tikzcd}
		{\hom_{E^\Psi}(\chi(u,v,f),\langle r,w,k,m\rangle)} && {\hom_{E^\Phi \times_{B^\Phi} B^\Psi}(\langle u,v,f\rangle,\langle r,w,k\rangle)}
		\arrow["F_{\angled{r,w,k,m}}", from=1-1, to=1-3]
	\end{tikzcd}\]
	defined by
	\[ F(\alpha,\beta) \jdeq \alpha\]
	is an equivalence. Finally, contractibility of the fibers amounts to the universal property~\Cref{eq:lari-cell-explicit}, but this precisely means that $\chi(u,v,f)$ is a $j$-LARI cell.
\end{proof}

\subsection{LARI functors}\label{ssec:lari-fun}

Let $j:\Phi \hookrightarrow \Psi$ be a shape inclusion.
\begin{definition}[$j$-LARI functors]\label{def:lari-fun}
	Over Rezk types $A$ and $B$, resp., consider $j$-LARI families $Q:A \to \UU$ and $P:E\to \UU$, resp., with
	\[ \xi \defeq \Un_A(Q): F \fibarr A, \quad \pi \defeq \Un_E(P): E \fibarr B. \]
	Assume there is a fibered functor from $\xi$ to $\pi$ given by a commutative square:
	\[\begin{tikzcd}
		F && E \\
		A && B
		\arrow["\xi"', two heads, from=1-1, to=2-1]
		\arrow["\alpha"', from=2-1, to=2-3]
		\arrow["\pi", two heads, from=1-3, to=2-3]
		\arrow["\varphi", from=1-1, to=1-3]
	\end{tikzcd}\]
	This defines a \emph{$j$-LARI functor} if and only if the following proposition is satisfied:
	\[ \prod_{m:\Psi \to F} \isLARICell_{j}^Q(m) \to \isLARICell_{j}^P(\varphi\,m)\]
\end{definition}

\begin{proposition}[Naturality of $j$-LARI functors]\label{prop:nat-larifun}
	Let $B$ be a Rezk type, and $P:B \to \UU$, $Q:C \to \UU$, resp. be $j$-LARI families with associated fibrations $\xi:F \fibarr A$ and $\xi:E \fibarr B$, resp. Then the proposition that $\pair{\alpha}{\varphi}$ be a $j$-LARI functor from $\xi$ to $\pi$ is logically equivalent to $\pair{\alpha}{\varphi}$ commuting with cocartesian lifts: \ie~for any $\angled{r,w,k,m} : F^\Phi \times_{A^\Phi} A^\Psi$ there exists an identification\footnote{suppressing the ``lower'' data which can be taken to consist of identities anyway} of $\Psi$-cells
	\[ \varphi\big(Q_!(r,w,k)\big) = P_!(\alpha \,r, \alpha\,w, \varphi\,k, \varphi\,d). \]
\end{proposition}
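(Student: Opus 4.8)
The plan is to prove the biconditional by unwinding both sides into statements about the filler data $P_!$ and $Q_!$, using the characterization of $j$-LARI cells via contractibility from \Cref{eq:lari-cell-explicit} together with \Cref{thm:lari-fams-lifting}, which guarantees that each family possesses canonical lifts. First I would fix $\angled{r,w,k,m}$ in $F^\Phi \times_{A^\Phi} A^\Psi$ and observe that, since $Q$ is a $j$-LARI family, the cell $Q_!(r,w,k)$ is by definition a $j$-LARI cell in $F$. The hypothesis that $\pair{\alpha}{\varphi}$ is a $j$-LARI functor then says precisely that $\varphi\big(Q_!(r,w,k)\big)$ is again a $j$-LARI cell in $E$, now lying over the transported base data $\angled{\alpha r, \alpha w}$ with dependent boundary $\varphi k$.

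Next I would exploit the uniqueness afforded by \Cref{thm:reladj-char}: the type $\sum_{g} \isLARICell_j(g)$ of $j$-LARI lifts over prescribed boundary data is a proposition, hence contractible once inhabited. Since $P$ is a $j$-LARI family, $P_!(\alpha r, \alpha w, \varphi k, \varphi d)$ is the (essentially unique) $j$-LARI lift over $\angled{\alpha r, \alpha w, \varphi k}$. The cell $\varphi\big(Q_!(r,w,k)\big)$ lies over the same boundary data (because $\pair{\alpha}{\varphi}$ is a strictly commuting fibered functor, the underlying base and boundary pieces agree judgmentally, or up to the identities we are suppressing), and by the $j$-LARI functor hypothesis it is also a $j$-LARI cell. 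Two inhabitants of a contractible type are identified, which yields the desired identity $\varphi\big(Q_!(r,w,k)\big) = P_!(\alpha r, \alpha w, \varphi k, \varphi d)$. This establishes the forward direction.

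For the converse I would argue that the naturality identity forces $j$-LARI cells to be preserved. Given an arbitrary $m:\Psi \to F$ with $\isLARICell_j^Q(m)$, the propositional uniqueness of lifts identifies $m$ with $Q_!(r,w,k)$ on its own boundary data $\angled{r,w,k}$; applying $\varphi$ and transporting the assumed identity $\varphi\big(Q_!(r,w,k)\big) = P_!(\dots)$ shows $\varphi\,m$ is identified with a genuine $P_!$-lift, which is a $j$-LARI cell by construction. Since $\isLARICell_j^P$ is a proposition, being identified with a $j$-LARI cell suffices to conclude $\isLARICell_j^P(\varphi\,m)$, which is exactly the defining condition of a $j$-LARI functor.

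The main obstacle I expect is bookkeeping the ``lower'' boundary data that the statement explicitly suppresses in its footnote: one must check that $\varphi\big(Q_!(r,w,k)\big)$ and $P_!(\alpha r, \alpha w, \varphi k, \varphi d)$ really do lie over the \emph{same} base cell and dependent boundary, so that comparing them inside a single contractible type of lifts is legitimate. This requires care that the commutativity of the fibered functor square interacts correctly with exponentiation by $\Psi$ and $\Phi$ and with the pushout-product boundary $b_1 \poprod j$, i.e.\ that $\alpha \circ (-)$ and $\varphi \circ (-)$ send the boundary constraints of a $Q$-lift to precisely the boundary constraints defining the corresponding $P$-lift. Once this alignment of boundary data is verified — which should follow from strict (or fibrant-replacement-corrected) commutativity of the square under the functorial action of $(-)^\Psi$ and $(-)^\Phi$ — the rest is a formal consequence of propositionality and contractibility, with no substantive homotopical content.
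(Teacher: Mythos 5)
Your proposal is correct and matches the paper's own argument: the forward direction is exactly the paper's appeal to uniqueness of $j$-LARI lifts over fixed boundary data (the paper phrases this as uniqueness of relative left adjoints, \Cref{cor:unique-left-rel-adj}, which is the same fact underlying the propositionality of $\sum_g \isLARICell_j(g)$ you invoke via \Cref{thm:reladj-char}), and your converse — identifying an arbitrary $j$-LARI cell $m$ with the canonical lift of its own boundary data and transporting along the assumed identification — is precisely the paper's remark that any $j$-LARI cell occurs as a $j$-LARI lift of its projected data. Your closing discussion of aligning the suppressed ``lower'' boundary data is a point the paper itself glosses over with a footnote, so flagging it is appropriate rather than a deviation.
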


\begin{proof}
	Since $\varphi$ is a $j$-LARI functor by assumption $g \defeq \varphi\big(Q_!(r,w,k)\big)$ is a $j$-LARi cell,~\ie~$\relAdj{g}{\angled{u,v,f}}{\pi'}$. But also for the cell $g' \defeq P_!(\alpha \,r, \alpha\,w, \varphi\,k, \varphi d)$ by construction we have \\ $\relAdj{g'}{\angled{u,v,f}}{\pi'}$, hence there is a homotopy $g=g'$, by uniqueness of relative left adjoints.
	
	Conversely, since any $j$-LARI arrow in a family occurs as a $j$-LARI lift (of the data given by projection and restriction), the assumed identifications yield the desired implication.
\end{proof}

\begin{theorem}[Chevalley criterion for $j$-LARI functors, \cf~\protect{\cite[Theorem~5.3.19]{BW21}}, {\protect\cite[Theorem~5.3.4]{RV21}}]\label{thm:char-lari-fun}
	Given data as in~\Cref{def:lari-fun}, the following are equivalent:
	\begin{enumerate}
		\item The fiberwise map $\pair{\alpha}{\varphi}$ is a $j$-LARI functor.
		\item The mate of the induced canonical natural isomorphism is invertible, too:
		\[\begin{tikzcd}[sep=4ex]
			{F^{\Psi}} && {E^\Psi} & {F^\Psi} && {E^{\Delta^1}} & {} \\
			{F^\Phi \times_{A^\Phi} A^\Psi} && {E^\Phi \times_{B^\Phi} B^\Psi} & {F^\Phi \times_{A^\Phi} A^\Psi} && {E^\Phi \times_{B^\Phi} B^\Psi} & {}
			\arrow["{\xi'}"', from=1-1, to=2-1]
			\arrow["{\varphi^\Phi \times_{\alpha^\Phi} \alpha^\Psi}"', from=2-1, to=2-3]
			\arrow["{\varphi^{\Delta^1}}", from=1-1, to=1-3]
			\arrow[""{name=0, anchor=center, inner sep=0}, "{r'}", from=1-3, to=2-3]
			\arrow["{=}", shorten <=9pt, shorten >=9pt, Rightarrow, from=2-1, to=1-3]
			\arrow[""{name=1, anchor=center, inner sep=0}, "\ell", from=2-4, to=1-4]
			\arrow["{\varphi^\Phi \times_{\alpha^\Phi} \alpha^\Psi}"', from=2-4, to=2-6]
			\arrow["{\varphi^{\Delta^1}}", from=1-4, to=1-6]
			\arrow["{\ell'}"', from=2-6, to=1-6]
			\arrow["{=}"', shorten <=12pt, shorten >=23pt, Rightarrow, from=2-6, to=1-4]
			\arrow["\rightsquigarrow", Rightarrow, draw=none, from=0, to=1]
		\end{tikzcd}\]
	\end{enumerate}
\end{theorem}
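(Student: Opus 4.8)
The plan is to unwind the mate in terms of the explicit left adjoint right inverses supplied by \Cref{thm:lari-fams-lifting} and then to read off the equivalence from \Cref{prop:nat-larifun}. Since $Q$ and $P$ are $j$-LARI families, that theorem gives LARIs $\ell$ of $\xi' \defeq i_0 \cotens \xi$ and $\ell'$ of $\pi' \defeq i_0 \cotens \pi$, computed on objects by the lifting operations $\ell\angled{r,w,k} \jdeq \angled{r,w,k,Q_!(r,w,k)}$ and $\ell'\angled{u,v,f} \jdeq \angled{u,v,f,P_!(u,v,f)}$; both are \emph{strict} sections of their respective gap maps. Writing $\bar\varphi \defeq \varphi^\Phi \times_{\alpha^\Phi}\alpha^\Psi$ for the lower horizontal map, fiberedness of $\pair{\alpha}{\varphi}$ yields the strictly commuting square $\pi' \circ \varphi^\Psi = \bar\varphi \circ \xi'$, which is the canonical natural isomorphism displayed in the left-hand square of~(2).

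Next I would identify the mate concretely, sidestepping the general pasting calculus that this setting lacks. Because $\ell$ is a strict section, the unit of $\ell \dashv \xi'$ is an identity, so the mate $2$-cell collapses to the counit of $\ell' \dashv \pi'$ whiskered along $\varphi^\Psi\ell$. For data $\angled{r,w,k}$ the cell $\varphi(Q_!(r,w,k)) \jdeq \varphi^\Psi(\ell\angled{r,w,k})$ projects under $\pi'$ to $\bar\varphi\angled{r,w,k}$, exactly the base datum over which $P_!(\alpha\,r,\alpha\,w,\varphi\,k) \jdeq \ell'(\bar\varphi\angled{r,w,k})$ lives; hence the mate component is the \emph{vertical} comparison arrow
\[ P_!(\alpha\,r,\alpha\,w,\varphi\,k) \longrightarrow \varphi\big(Q_!(r,w,k)\big) \]
issued by the universal property of the $j$-LARI lift $P_!$. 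I expect this explicit description of the mate — rather than any subsequent bookkeeping — to be the technical heart of the argument.

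From here the equivalence is immediate. Invertibility of the mate (statement~(2)) says precisely that each such comparison is an isomorphism. Over the fixed base datum $\bar\varphi\angled{r,w,k}$ the counit comparison out of the canonical lift $P_!(\ldots)$ is invertible if and only if $\varphi(Q_!(r,w,k))$ is itself a $j$-LARI cell, since a $\Psi$-cell is a $j$-LARI cell precisely when the counit comparison from its canonical lift is invertible (a fiberwise reformulation of \Cref{thm:lari-fams-lifting}), and $\isLARICell$, being a proposition by \Cref{thm:reladj-char}, is invariant under such isomorphism. Converting invertible vertical arrows into identities by Rezk completeness of the isoinner fibers, statement~(2) is thus logically equivalent to the family of identifications $\varphi(Q_!(r,w,k)) = P_!(\alpha\,r,\alpha\,w,\varphi\,k)$.

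Finally I would close the loop through \Cref{prop:nat-larifun}, which identifies this very family of identifications with the assertion that $\pair{\alpha}{\varphi}$ commutes with $j$-LARI lifts, hence with $\pair{\alpha}{\varphi}$ being a $j$-LARI functor (statement~(1)). For the direct implications: if $\pair{\alpha}{\varphi}$ is a $j$-LARI functor then $\varphi(Q_!(r,w,k))$ is a $j$-LARI cell over the same base datum as $P_!(\ldots)$, so \Cref{cor:unique-left-rel-adj} forces the comparison to be an identity and hence invertible; conversely an invertible mate exhibits each $\varphi(Q_!(r,w,k))$ as fiberwise isomorphic to a $j$-LARI lift, so it is a $j$-LARI cell, and since every $j$-LARI cell arises as the lift of its own projection data, \Cref{prop:nat-larifun} returns the $j$-LARI functor condition.
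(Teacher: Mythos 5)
Your proposal is correct and follows essentially the same route as the paper: both identify the mate explicitly as the filling-induced comparison cell $P_!(\alpha\,r,\alpha\,w,\varphi\,k) \to \varphi\big(Q_!(r,w,k)\big)$ (the paper via the counit formula and the pasting computation cited from \cite[Proposition~A.1.2]{BW21}, you via the observation that the strict-section unit is an identity so the mate collapses to the whiskered counit — the same computation), and then both conclude by \Cref{prop:nat-larifun}. Your additional remarks making explicit the passage between invertibility of the comparison and identifications via Rezk completeness, and the appeal to \Cref{cor:unique-left-rel-adj}, are just an unpacking of what the paper compresses into its citation of \Cref{prop:nat-larifun}.
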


\begin{proof}
	The counit of the adjunction exhibiting $\pi:E \fibarr B$ as a $j$-LARI fibration, at stage $\angled{u,v,f,g}:E^\Psi$, can be taken to be
	\begin{align}\label{eq:counit-lari-fib}
		\varepsilon_{u,v,f,g} \jdeq \angled{\id_u,\id_v,\id_f,\tyfill_{P_!(u,v,f)}(\id_u,\id_v,\id_f,g)},
	\end{align}
	as one sees by the usual construction from the transposing map, \cf~the proof of~\Cref{thm:lari-fams-lifting}. Now, from the proof of~\cite[Proposition~A.1.2]{BW21} \footnote{\cf~\cite[Theorem~5.3.19]{BW21} for the cocartesian case} we see that the pasting cell constructed from the diagram
	\[\begin{tikzcd}
		{F^\Phi \times_{A^\Phi} A^\Psi} & {F^\Psi} && {E^\Psi} \\
		& {F^\Phi \times_{A^\Phi} A^\Psi} && {E^\Phi \times_{B^\Phi} B^\Psi} && {E^\Psi}
		\arrow["\mu", from=1-1, to=1-2]
		\arrow["{\xi'}", from=1-2, to=2-2]
		\arrow[""{name=0, anchor=center, inner sep=0}, Rightarrow, no head, from=1-1, to=2-2]
		\arrow["{\varphi^\Phi \times_{\alpha^\Phi} \alpha^\Psi}"', from=2-2, to=2-4]
		\arrow["{\varphi^\Psi}", from=1-2, to=1-4]
		\arrow["{\pi'}"', from=1-4, to=2-4]
		\arrow[shorten <=18pt, shorten >=18pt, Rightarrow, no head, from=2-2, to=1-4]
		\arrow["\chi"', from=2-4, to=2-6]
		\arrow[""{name=1, anchor=center, inner sep=0}, Rightarrow, no head, from=1-4, to=2-6]
		\arrow[shorten <=4pt, shorten >=4pt, Rightarrow, no head, from=0, to=1-2]
		\arrow[shorten <=4pt, shorten >=4pt, Rightarrow, no head, from=2-4, to=1]
	\end{tikzcd}\]
	is given, at stage $\angled{r,w,k,m}:F^\Psi$ by
	\begin{align*} 
		& \widetilde{\varepsilon}_{r,w,k,m} = \varepsilon_{\alpha\,r,\alpha\,w,\varphi\,k,\varphi(Q_!(r,w,k))} \\
		& = \angled{\id_{\alpha\,r},\id_{\alpha\,w},\id_{\varphi\,k},\tyfill_{P_!(\alpha\,r,\alpha\,w,\alpha\,k)}(\id_{\alpha\,r},\id_{\alpha\,w},\id_{\varphi\,k},\varphi Q_!(r,w,k))}.
	\end{align*}
	This collapses to just the comparison map $P_!(\alpha\,r,\alpha\,w,\alpha\,k) \to \varphi Q_!(r,w,k)$ given by filling. Now by~\Cref{prop:nat-larifun} the invertibility of this is equivalent to $\pair{\alpha}{\varphi}$ being a $j$-LARI functor. 
\end{proof}

\section{Cocartesian arrows, fibrations, and functors}

Again, we fix the following. Let $P : B \to \UU$ be an isoinner family over a Rezk type $B$, with associated isoinner fibration $\pi \jdeq \pr_1 : E \fibarr B$. In this section, we want to show the notions of $j$-LARI cells, fibrations, and functors from~\ref{sec:lari-stuff} specialize to cocartesian arrows, fibrations, and functors, by taking for the shape inclusion $j$ the inclusion of the initial point $0$ into $\Delta^1$.

Part of the characterizations come out as corollaries from our general theorems in~\Cref{sec:lari-stuff}, namely when the involve ALLD, or, resp., LARI conditions. In the case for relative, or, resp., fibered adjoint conditions, we give a direct proof since, in the framework of~\Cref{sec:lari-stuff} we cannot generalize the structure needed for these kinds of conditions.

\subsection{Cocartesian arrows}

Cocartesian arrows are dependent arrows that satisfy an initiality property which we will exhibit as an instance of~\Cref{def:lari-cell}. They are those arrows that characterize the lifting properties of cocartesian families, which in turn encode fibered $\infty$-categories.

\begin{definition}[Cocartesian arrows]
	Consider the isoinner fibration $\pi : E \fibarr B$. We call $f : \Delta^1 \to E$ a \emph{cocartesian arrow} if it is an $i_0$-LARI cell, where $i_0 : \unit \to \Delta^1$ is the inclusion of $\{0\}$ into $\Delta^1$.

	This means, the following diagram is an absolute left lifting diagram:
\[\begin{tikzcd}
	&& {E^{\Delta^1}} \\
	\unit && {B^{\Delta^1} \times_B E}
	\arrow[""{name=0, anchor=center, inner sep=0}, "{\pair{u}{e}}"', from=2-1, to=2-3]
	\arrow["{i_0 \cotens \pi}", from=1-3, to=2-3]
	\arrow["\pair{b}{e}", from=2-1, to=1-3]
	\arrow[shorten <=10pt, shorten >=14pt, Rightarrow, no head, from=1-3, to=0]
\end{tikzcd}\]
	which induces a fibered equivalence 
	\[ \comma{f}{E^{\Delta^1}} \simeq_{E^{\Delta^1}} \comma{\pair{u}{e}}{\pi'}  \]
	By the discussion around~\Cref{def:lari-cell}, this is equivalent to:
	\[ \prod_{b,b',b'',b''' : B} \prod_{\substack{e:P\,b, \\ e' : P\,b', \\ e'' : P\,b'', \\ e''' : P\,b'''}} 
	  \prod_{\substack{u:b \to_B b' \\ v:b' \to_B b'' \\ w:b \to_B b'' \\ w:b'' \to_B b'''}}
	  \prod_{\sigma : vu =_B w'w}  \prod_{\substack{f:e \to^P_u e' \\ h : e \to^P_w e'' \\ h' : e'' \to^P_{w'} e'''}} \isContr\left( \sum_{g:e' \to^P e'''} gf =^P_\sigma h'h \right)\]
\end{definition}

This is tantamount to a ``cubical version'' of the cocartesianness condition, which is a ``degenerate'' instance of the general picture for $j$-LARI cells, cf.~\Cref{fig:lari-cell}: 
\[\begin{tikzcd}
	e & {e''} \\
	{e'} & {e'''} \\
	b & {b''} \\
	{b'} & {b'''}
	\arrow["u"', from=3-1, to=4-1]
	\arrow["w", from=3-1, to=3-2]
	\arrow["{w'}", from=3-2, to=4-2]
	\arrow["v", from=4-1, to=4-2]
	\arrow["f"', from=1-1, to=2-1]
	\arrow["h", from=1-1, to=1-2]
	\arrow["{h'}", from=1-2, to=2-2]
	\arrow["g", dashed, from=2-1, to=2-2]
\end{tikzcd}\]

We will use the notation $\pi \downarrow B \defeq B^{\Delta^1} \times_B E$, see also the discussion of comma types~\cite[Section~2.6]{BW21} and comma fibrations~\cite[Subsection~4.3]{W22-2sCart}.

\begin{theorem}[Characterization of cocartesian arrows]\label{thm:cocart-arr-char}
	Let $P:B \to \UU$ be an isoinner family over a Rezk type $B$. For $u:b \to_B b'$, consider a dependent arrow $f: e \to^P_u e'$ with $e:P\,b$, $e':P\,b'$. Then the following conditions are equivalent: $f$ is a $P$-cocartesian arrow if and only if either of :
	\begin{enumerate}
		\item\label{it:cocart-arr-char-i} The dependent arrow $f: e \to^P_u e'$ is $P$-cocartesian.
		\item\label{it:cocart-arr-char-ii} The diagram
	\[\begin{tikzcd}
	&& E \\
	\unit && {\comma{\pi}{B}}
	\arrow[""{name=0, anchor=center, inner sep=0}, "{{\pair{u}{e}}}"', from=2-1, to=2-3]
	\arrow["{\pi'}", two heads, from=1-3, to=2-3]
	\arrow["{{f}}", from=2-1, to=1-3]
	\arrow[shorten <=11pt, shorten >=11pt, Rightarrow, no head, from=0, to=1-3]
	\end{tikzcd}\]
		is an absolute left lifting diagram.
		\item\label{it:cocart-arr-char-iii} The morphism $\sigma \defeq \langle u, \id_{b'}, f \rangle: \pair{u}{e} \to \iota \, \pair{b}{e}$ seen as a $2$-cell
		\[\begin{tikzcd}
			&& E \\
			\unit && {\comma{\pi}{B}}
			\arrow[""{name=0, anchor=center, inner sep=0}, "{\pair{u}{e}}"', from=2-1, to=2-3]
			\arrow["\iota", two heads, from=1-3, to=2-3]
			\arrow["{\pair{b}{e}}", from=2-1, to=1-3]
			\arrow["\sigma"{description}, shorten <=8pt, shorten >=8pt, Rightarrow, from=0, to=1-3]
		\end{tikzcd}\]
		defines the unit of a relative adjunction
		\[  \comma{\pair{b}{e}}{E} \simeq \comma{\pair{u}{e}}{\iota}.\]
	\end{enumerate}
\end{theorem}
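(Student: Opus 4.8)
The plan is to prove \Cref{thm:cocart-arr-char} by reducing everything to the general machinery for $j$-LARI cells already developed in \Cref{sec:lari-stuff}, specialized to the shape inclusion $j \defeq i_0 : \unit \hookrightarrow \Delta^1$. The equivalence of \eqref{it:cocart-arr-char-i} and \eqref{it:cocart-arr-char-ii} should be essentially a restatement of unwinding definitions: by the very \emph{Definition} of a cocartesian arrow (an $i_0$-LARI cell) together with \Cref{def:lari-cell}, being a $j$-LARI cell is \emph{defined} to mean that the relevant commutative triangle is an absolute left lifting diagram. So the main content here is to check that the ALLD diagram written in \eqref{it:cocart-arr-char-ii}, with codomain $\comma{\pi}{B} \equiv B^{\Delta^1} \times_B E$, matches the diagram appearing in \Cref{def:lari-cell} after substituting $\Phi \jdeq \unit$, $\Psi \jdeq \Delta^1$, $j \jdeq i_0$, and identifying $E^\Phi \simeq E$, $B^\Phi \simeq B$, so that $B^\Psi \times_{B^\Phi} E^\Phi \simeq \comma{\pi}{B}$. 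First I would record these identifications explicitly (exploiting $E^{\unit} \simeq E$ and $B^{\unit} \simeq B$, by function extensionality for the terminal shape), and then invoke \Cref{thm:reladj-char} to pass freely between the ALLD formulation and the fibered-equivalence formulation of the comma types.

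Next I would establish the equivalence with \eqref{it:cocart-arr-char-iii}. The distinction between \eqref{it:cocart-arr-char-ii} and \eqref{it:cocart-arr-char-iii} is that the former is phrased via the Leibniz cotensor $\pi' \defeq i_0 \cotens \pi : E^{\Delta^1} \to \comma{\pi}{B}$ (an absolute \emph{lifting} datum), while the latter is phrased via the inclusion $\iota$ and a $2$-cell $\sigma \defeq \angled{u,\id_{b'},f}$ exhibiting a \emph{relative adjunction} $\relAdj{\pair{b}{e}}{\pair{u}{e}}{\iota}$. The plan is to use \Cref{thm:reladj-char} again: the relative-adjunction-via-units formulation \eqref{it:reladj-unit} and the ALLD formulation \eqref{it:reladj-alld} are provably equivalent propositions for any cospan, so it suffices to verify that the cospan and unit datum in \eqref{it:cocart-arr-char-iii} are precisely the transposed/reindexed form of the data in \eqref{it:cocart-arr-char-ii}. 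Concretely, I would check that the $2$-cell $\sigma = \angled{u,\id_{b'},f}$ is exactly the relative unit whose transposition map $\Theta_\sigma$ is the map whose fiberwise invertibility is asserted by the cocartesianness condition, matching the explicit contractibility statement $\isContr\big(\sum_{g} gf =^P_\sigma h'h\big)$ spelled out in the \emph{Definition} of cocartesian arrows. This amounts to identifying the comma object $\comma{\pair{b}{e}}{E}$ with the source of the transposition and $\comma{\pair{u}{e}}{\iota}$ with its target.

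The step I expect to be the main obstacle is the bookkeeping around the two codomains $\comma{\pi}{B}$ versus the Leibniz-cotensor target, and in particular getting the degenerate endpoint data to line up. Because $j = i_0$ picks out only the initial vertex, the ``$\Phi$-part'' of a $\Psi$-cell degenerates, and one must be careful that the restriction maps, the boundary data $[\pair{g}{m},\alpha_2]$ from \eqref{eq:lari-cell-explicit}, and the pushout product $b_1 \poprod i_0$ collapse to the expected ``cubical'' picture — i.e.\ the commuting square of arrows $u,v,w,w'$ together with $\sigma : vu = w'w$ and the dependent triangle $gf =^P_\sigma h'h$ drawn in the excerpt. I would make this precise by computing $b_1 \poprod i_0$ via the explicit pushout-product formula from \Cref{def:po-prod} (equivalently \cite[Theorem 4.2]{RS17}) and checking it reproduces the boundary of the relevant square, so that the contractibility of the extension type in \eqref{eq:lari-cell-explicit} becomes exactly the contractibility statement in the definition of cocartesian arrow. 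Once the shapes are matched, the equivalences \eqref{it:cocart-arr-char-i} $\Leftrightarrow$ \eqref{it:cocart-arr-char-ii} $\Leftrightarrow$ \eqref{it:cocart-arr-char-iii} follow formally from \Cref{thm:reladj-char}, and since each item is a proposition, logical equivalence upgrades to equivalence of types.
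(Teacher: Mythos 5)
Your skeleton agrees with the paper's in two places: the equivalence of conditions~\ref{it:cocart-arr-char-i} and~\ref{it:cocart-arr-char-ii} is indeed definitional (a cocartesian arrow \emph{is} an $i_0$-LARI cell, and \Cref{def:lari-cell} \emph{is} the ALLD condition), and unwinding condition~\ref{it:cocart-arr-char-ii} into an explicit contractibility statement via the pushout product $b_1 \poprod i_0$ is exactly how the paper obtains the ``cubical'' lifting condition~(\ref{eq:cocart-lift-cubical}). The genuine gap is in how you bridge~\ref{it:cocart-arr-char-ii} and~\ref{it:cocart-arr-char-iii}. You claim this ``follows formally from \Cref{thm:reladj-char}'' once the data are seen as ``transposed/reindexed'' forms of each other, but \Cref{thm:reladj-char} compares the transposing, unit, and ALLD formulations of a relative adjunction \emph{for one fixed cospan}, whereas~\ref{it:cocart-arr-char-ii} and~\ref{it:cocart-arr-char-iii} concern genuinely different cospans: in~\ref{it:cocart-arr-char-ii} the right leg is the Leibniz cotensor $\pi' : E^{\Delta^1} \to \comma{\pi}{B}$ with lift $f$ and identity $2$-cell, while in~\ref{it:cocart-arr-char-iii} it is the inclusion $\iota : E \to \comma{\pi}{B}$ with lift a point of $E$ and $2$-cell $\sigma$. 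No application of \Cref{thm:reladj-char} converts a relative adjunction over $\pi'$ into one over $\iota$; each condition must be unwound separately and the two resulting lifting conditions must then be reconciled by hand.

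That reconciliation is precisely where the paper locates the essential mathematical content, and it is absent from your plan. Unwinding~\ref{it:cocart-arr-char-ii} yields the square-filling condition~(\ref{eq:cocart-lift-cubical}) (arbitrary squares $\sigma : vu = w'w$), whereas unwinding invertibility of the transposition map for the cospan in~\ref{it:cocart-arr-char-iii} yields lifting against morphisms of $\comma{\pi}{B}$ with target of the form $\pair{\id_{b''}}{e''}$ --- squares with a degenerate edge, i.e.\ essentially the standard composition-based condition~(\ref{eq:cocart-lift}). These do \emph{not} match on the nose, contrary to your assertion that $\Theta_\sigma$-invertibility directly ``matches'' the contractibility statement in the definition. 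The paper proves their equivalence using the decomposition $\Delta^1 \times \Delta^1 \simeq \Delta^2 \cup_{\Delta^1_1} \Delta^2$ together with fibered transport along it in one direction, and instantiation of the square with identity edges in the other, plus the extension-type manipulations from \cite[Section~4.4]{RS17} and \cite[Subsection~2.4]{BW21}. Your proposal never invokes this decomposition, nor does it engage at all with the triangle condition~(\ref{eq:cocart-lift}) --- which is also what connects the theorem to the established notion of cocartesian arrow from~\cite{BW21} and keeps it from being a near-tautology about $i_0$-LARI cells. Without this step, your argument establishes~\ref{it:cocart-arr-char-i}~$\Leftrightarrow$~\ref{it:cocart-arr-char-ii} and reduces~\ref{it:cocart-arr-char-iii} to a lifting condition, but does not close the loop.
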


\begin{proof}

The essential point is to show that the standard cocartesian lifting condition
\begin{align}\label{eq:cocart-lift}
	 \prod_{b,b',b'' : B} \prod_{\substack{e:P\,b, \\ e' : P\,b', \\ e'' : P\,b''}} 
	\prod_{\substack{u:b \to_B b' \\ v:b' \to_B b'' }}
	\prod_{\kappa : v \circ u=_B vu}
	 \prod_{\substack{f:e \to^P_u e' \\ h : e \to^P_{vu} e''}} \isContr\left( \sum_{g:e' \to^P e'''} gf =^P_\kappa h'h \right)
	\end{align}
is equivalent to the ``cubical'' cocartesian lifting condition
\begin{align}\label{eq:cocart-lift-cubical}
 \prod_{b,b',b'',b''' : B} \prod_{\substack{e:P\,b, \\ e' : P\,b', \\ e'' : P\,b'', \\ e''' : P\,b'''}} 
\prod_{\substack{u:b \to_B b' \\ v:b' \to_B b'' \\ w:b \to_B b'' \\ w':b'' \to_B b'''}}
\prod_{\sigma : vu =_B w'w}  \prod_{\substack{f:e \to^P_u e' \\ h : e \to^P_w e'' \\ h' : e'' \to^P_{w'} e'''}} \isContr\left( \sum_{g:e' \to^P e'''} gf =^P_\sigma h'h \right).
\end{align}
Proposition~(\ref{eq:cocart-lift}) implies Proposition~(\ref{eq:cocart-lift-cubical}) as follows: assume given data
\[ \langle b,b',b'',e,e',e'',e''',u,v,w,w',\sigma,f,h,h'\rangle\]as signified in Proposition~(\ref{eq:cocart-lift-cubical}). Then, using the identification (\cf~\cite[Subsection~3.4]{RS17})
\begin{align}\label{eq:square}
	\Delta^1 \times \Delta^1 \simeq \Delta^2 \cup_{\Delta_1^1} \Delta^2,
\end{align} instantiating Proposition~(\ref{eq:cocart-lift}) with $\langle b,b',b''',e,e',e''', u,v, \sigma':vu = w'w, f, h'h \rangle$ we obtain, up to homotopy, a unique pair $\pair{g:e' \to^P_v e'''}{\tau':g \circ f =^P_{\sigma'} h'h}$. Via (fibered) transport along the equivalence (\ref{eq:square}), we obtain an inhabitant of Proposition~(\ref{eq:cocart-lift-cubical}).

For the direction from Proposition~(\ref{eq:cocart-lift-cubical}) to Proposition~(\ref{eq:cocart-lift}), we can instantiate the square by so that $h$ or $h'$ become identities. In sum, this shows that the propositions (\ref{eq:cocart-lift}) and (\ref{eq:cocart-lift-cubical}) are equivalent. 

Now, Conditions~\ref{it:cocart-arr-char-i}~and~\ref{it:cocart-arr-char-ii} are equivalent by definition. Instantiating $j : \Phi \hookrightarrow \Psi$ in~\Cref{ssec:lari-cells} as $i_0 : \unit \hookrightarrow \Delta^1$ one gets that Condition~\ref{it:cocart-arr-char-ii} is equivalent to Proposition~(\ref{eq:cocart-lift-cubical}).

Now, Condition~\ref{it:cocart-arr-char-iii} says that the $2$-cell $\langle u, \id_{b'}, f \rangle$ gives rise to a relative adjunction. By~\Cref{thm:reladj-char} this means that the map
 \[ \Psi :  \underbrace{\prod_{\pair{b''}{e''}:E} \big( \pair{b'}{e'} \to_E \pair{b''}{e''} \big) \to \big( \pair{u}{e} \to_{\comma{\pi}{B}} \pair{\id_{b''}}{e''} \big)}_{\simeq (\comma{\pair{b}{e}}{E} \to \comma{\pair{u}{e}}{\iota})} \]
 defined by
 \[ \Psi_{b'',e''}(v:b' \to b'', g:e'\to_v e'') \defeq \langle \langle vu, \id_v  \rangle : u \rightrightarrows \id_{b''}, gf: e \to_{vu} e''\rangle \]
 is an equivalence. Using manipulations of extension types (see~\cite[Section~4.4]{RS17} and \cite[Subsubsection~2.1.2 and Subsection~2.4]{BW21}) this proposition is seen to be equivalent to Condition~\ref{eq:cocart-lift-cubical}. 
\end{proof}

As discussed in~\cite[Section~5.1]{BW21}, concretely for the case of cocartesian arrows, there exist yet more ways to characterize them, to be found in the classical literature such as~\cite{JoyQcat} and~\cite{LurHTT}.

\subsection{Cocartesian families}

Cocartesian families, or equivalently, cocartesian fibrations describe families of $\infty$-categories parametrized by an $\infty$-category. In particular, these families are \emph{functorial}, in that we can lift arrows from the base to the family, in such a way that they compose and lift identities to identities (all up to homotopy).

An extensive treatment of cocartesian families in Riehl--Shulman's simplicial homotopy type theory~\cite{RS17} is given in~\cite{BW21}.

\begin{definition}[Cocartesian family]
	Let $P : B \to \UU$ be an isoinner family over a Rezk type $B$. We call $P$ a \emph{cocartesian family} if it is an $i_0$-LARI family.
\end{definition}

\begin{theorem}[Characterization of cocartesian families]
	Let $P : B \to \UU$ be an isoinner family over a Rezk type $B$, with associated projection map $\pi : E \fibarr B$. Then the following are equivalent:
	\begin{enumerate}
		\item\label{it:char-cocart-fam-i} The family $P$ is cocartesian.
		\item\label{it:char-cocart-fam-ii} The Leibniz cotensor map $i_0 \cotens \pi: E^{\Delta^1} \to \comma{\pi}{B}$ has a left adjoint right inverse, the \emph{cocartesian lifting map}:
		\[\begin{tikzcd}
			{E^{\Delta^1}} & {} \\
			&& {\pi \downarrow B} && E \\
			&& {B^{\Delta^1}} && B
			\arrow[two heads, from=2-3, to=3-3]
			\arrow["{\partial_0}"', from=3-3, to=3-5]
			\arrow[from=2-3, to=2-5]
			\arrow["\pi", two heads, from=2-5, to=3-5]
			\arrow["\lrcorner"{anchor=center, pos=0.125}, draw=none, from=2-3, to=3-5]
			\arrow["{\partial_0}", shift left=2, curve={height=-18pt}, from=1-1, to=2-5]
			\arrow[""{name=0, anchor=center, inner sep=0}, "\chi"', curve={height=12pt}, dashed, from=2-3, to=1-1]
			\arrow[""{name=1, anchor=center, inner sep=0}, "{i_0\widehat{\pitchfork}\pi}"', curve={height=12pt}, from=1-1, to=2-3]
			\arrow["{\pi^{\Delta^1}}"', shift right=2, curve={height=18pt}, two heads, from=1-1, to=3-3]
			\arrow["\dashv"{anchor=center, rotate=-118}, draw=none, from=0, to=1]
		\end{tikzcd}\]
		\item\label{it:char-cocart-fam-iii} The map
		\[ \iota \defeq \iota_P : E \to \comma{\pi}{B}, \quad \iota \, \pair{b}{e} :\jdeq \pair{\id_b}{e}  \]
		has a fibered left adjoint $\tau \defeq \tau_P: \comma{\pi}{B} \to E$, the \emph{cocartesian transport map}, as indicated in the diagram:
		\[\begin{tikzcd}
			{} && E && {\pi \downarrow B} \\
			\\
			&&& B
			\arrow[""{name=0, anchor=center, inner sep=0}, "\iota"', curve={height=12pt}, from=1-3, to=1-5]
			\arrow["\pi"', two heads, from=1-3, to=3-4]
			\arrow["{\partial_1'}", two heads, from=1-5, to=3-4]
			\arrow[""{name=1, anchor=center, inner sep=0}, "\tau"', curve={height=12pt}, from=1-5, to=1-3]
			\arrow["\dashv"{anchor=center, rotate=-90}, draw=none, from=1, to=0]
		\end{tikzcd}\]
	\end{enumerate}
\end{theorem}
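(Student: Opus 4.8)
The plan is to prove the three conditions equivalent by reducing \ref{it:char-cocart-fam-i}~$\Leftrightarrow$~\ref{it:char-cocart-fam-ii} to the general theory of \Cref{sec:lari-stuff} and then establishing \ref{it:char-cocart-fam-iii} by hand, as announced at the start of this section. All three conditions are propositions---the first two by the remark following \Cref{thm:reladj-char} and the discussion after \Cref{def:enough-lari-lifts}, the third by uniqueness of fibered left adjoints (\cf~\Cref{cor:unique-left-rel-adj} and \cite[Appendix~B]{BW21})---so it suffices to prove logical biconditionals, with no higher coherences to check. For \ref{it:char-cocart-fam-i}~$\Leftrightarrow$~\ref{it:char-cocart-fam-ii} I would instantiate \Cref{thm:lari-fams-lifting} at the shape inclusion $i_0:\unit \hookrightarrow \Delta^1$, so that $\Phi \jdeq \unit$ and $\Psi \jdeq \Delta^1$, whence $E^\Phi \simeq E$, $B^\Phi \simeq B$, and the pullback $E^\Phi \times_{B^\Phi} B^\Psi$ is exactly $E \times_B B^{\Delta^1} \simeq \comma{\pi}{B}$. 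Since a cocartesian family is by definition an $i_0$-LARI family, \Cref{thm:lari-fams-lifting} yields at once the equivalence of \ref{it:char-cocart-fam-i} with $i_0 \cotens \pi$ admitting a left adjoint right inverse $\chi$, the cocartesian lifting map of \ref{it:char-cocart-fam-ii}; no further argument is needed here.

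The substance is the passage to \ref{it:char-cocart-fam-iii}. The crucial observation is that, under fibrant replacement, a hom-type in $\comma{\pi}{B}$ taken \emph{over an identity of the base} computes as a type of dependent arrows: a morphism in $\comma{\pi}{B}$ from $\pair{u}{e}$ to $\pair{\id_{b'}}{e''}$ whose image under $\partial_1':\comma{\pi}{B} \to B$ is $\id_{b'}$ is precisely the datum of a dependent arrow $e \to^P_u e''$, while $\iota\,\pair{b'}{e''} \jdeq \pair{\id_{b'}}{e''}$. Consequently, setting $\tau \defeq \partial_1 \circ \chi$ by postcomposing the cocartesian lifting map with target evaluation $\partial_1: E^{\Delta^1} \to E$---so that $\tau\,\pair{u}{e}$ is the target of the cocartesian lift $\chi\,\pair{u}{e}$---the transposing equivalence defining the fibered adjunction $\tau \dashv \iota$ over $B$,
\[ \hom_{P\,b'}\big(\tau\,\pair{u}{e},\,e''\big) \;\simeq\; \big(e \to^P_u e''\big), \]
is \emph{literally} the universal property of the cocartesian lift recorded in \Cref{thm:cocart-arr-char}.

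Using this, for \ref{it:char-cocart-fam-ii}~$\Rightarrow$~\ref{it:char-cocart-fam-iii} I would take the above $\tau$, let the fibered unit $\eta$ be the family of cocartesian lifts $e \to^P_u \tau\,\pair{u}{e}$ (their $\partial_1'$-component being $\id_{b'}$, so that $\eta$ is indeed a fibered transformation), and read off $\tau \dashv \iota$ from the displayed equivalence via the unit-based characterization of fibered adjunctions, exactly as in \Cref{def:reladj-units}. Conversely, for \ref{it:char-cocart-fam-iii}~$\Rightarrow$~\ref{it:char-cocart-fam-i}, the unit component $\eta_{\pair{u}{e}}:\pair{u}{e} \to \iota\,\tau\,\pair{u}{e}$ of a fibered left adjoint has identity $\partial_1'$-component by fiberedness, hence is a dependent arrow $e \to^P_u \tau\,\pair{u}{e}$; the adjunction's universal property then exhibits this arrow as $P$-cocartesian, so that $P$ has enough cocartesian lifts and is therefore cocartesian.

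The main obstacle is precisely this direct verification of \ref{it:char-cocart-fam-iii}: unlike the ALLD/LARI conditions, the fibered-adjoint formulation is not an instance of the framework of \Cref{sec:lari-stuff}, so the transposition must be constructed and checked explicitly rather than imported. Concretely, the work sits in the extension-type bookkeeping identifying comma morphisms lying over $\id_{b'}$ with dependent arrows, in confirming that the unit and counit assembled from the cocartesian lifts are genuine \emph{fibered} natural transformations, and in invoking the fibered-adjunction calculus of \cite[Appendix~B]{BW21} to upgrade the fiberwise hom-equivalence to an adjunction over $B$. Beyond this careful matching of the comma-type data with the dependent-arrow universal property I expect no essentially new difficulty, the cocartesian universal property from \Cref{thm:cocart-arr-char} already being in hand.
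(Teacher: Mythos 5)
Your reduction of \ref{it:char-cocart-fam-i}$\Leftrightarrow$\ref{it:char-cocart-fam-ii} to \Cref{thm:lari-fams-lifting} instantiated at $i_0:\unit\hookrightarrow\Delta^1$ is exactly the paper's argument, and your propositionality remarks are consistent with it. For \ref{it:char-cocart-fam-iii}, however, the paper does not argue at all: it cites \cite[Theorem~5.2.7]{BW21}. Reconstructing that proof directly is a legitimate alternative, but your reconstruction has a genuine gap.

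The gap lies in what you take the fibered adjunction $\tau \dashv \iota$ to assert. Your ``crucial observation'' and the displayed transposing equivalence
\[ \hom_{P\,b'}\big(\tau\,\pair{u}{e},\,e''\big) \simeq \big(e \to^P_u e''\big) \]
concern only morphisms of $\comma{\pi}{B}$ lying over $\id_{b'}$, i.e.\ the \emph{vertical} homs. A fibered left adjoint in the sense required here (an adjunction whose unit is vertical, \cf~\Cref{def:reladj-units} and \cite[Appendix~B]{BW21}) demands an equivalence of the \emph{full} hom types
\[ \hom_E\big(\tau\,\pair{u}{e},\,\pair{b''}{e''}\big) \simeq \hom_{\comma{\pi}{B}}\big(\pair{u}{e},\,\iota\,\pair{b''}{e''}\big) \]
for arbitrary $\pair{b''}{e''}:E$. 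Unfolding the right-hand side (a square in $B$ with degenerate target edge, plus a dependent arrow over its source edge) gives $\sum_{v:b'\to_B b''}\big(e \to^P_{vu} e''\big)$, and fiberwise over $\hom_B(b',b'')$ the required equivalence reads $\big(u_!e \to^P_v e''\big) \simeq \big(e \to^P_{vu} e''\big)$: this is the full, cubical universal property of \Cref{thm:cocart-arr-char}, of which your displayed equivalence is only the instance $v \jdeq \id_{b'}$ --- so it is not ``literally'' that universal property. The mismatch matters asymmetrically. In \ref{it:char-cocart-fam-ii}$\Rightarrow$\ref{it:char-cocart-fam-iii} the full equivalence is in fact available from the cocartesian lifts, you just have to state and check it rather than its vertical shadow. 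But in \ref{it:char-cocart-fam-iii}$\Rightarrow$\ref{it:char-cocart-fam-i}, the step ``the adjunction's universal property then exhibits this arrow as $P$-cocartesian'' is exactly the point at issue: from the vertical equivalence alone, the unit components are only \emph{weakly} cocartesian (initial among lifts of $u$ up to vertical comparison), which does not give cocartesianness without a further closure-under-composition argument that you never make. Nor can the announced ``upgrade'' of a fiberwise hom-equivalence to a fibered adjunction be imported from \cite[Appendix~B]{BW21}: such assembly results presuppose cocartesianness of the fibrations involved, which is what is being proved, so the appeal is circular. The repair is to replace the vertical computation by the unfolding of $\hom_{\comma{\pi}{B}}\big(\pair{u}{e},\,\iota\,\pair{b''}{e''}\big)$ over arbitrary $v:b'\to_B b''$ as above; with that correction both directions go through, and you will have essentially reproduced the proof of \cite[Theorem~5.2.7]{BW21}.
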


\begin{proof}
	The equivalence of Items~(\ref{it:char-cocart-fam-i}) and (\ref{it:char-cocart-fam-ii}) follows by~\Cref{thm:lari-fams-lifting}. The equivalence of~(\ref{it:char-cocart-fam-iii}) with the other two is proven in~\cite[Theorem~5.2.7]{BW21}.
\end{proof}

\subsection{Cocartesian functors}

Given cocartesian fibrations over a Rezk base $B$, a \emph{cocartesian functor} is a fiberwise map as in
\[\begin{tikzcd}
	F && E \\
	& B
	\arrow["\varphi", from=1-1, to=1-3]
	\arrow["\pi"', two heads, from=1-1, to=2-2]
	\arrow["\xi", two heads, from=1-3, to=2-2]
\end{tikzcd}\]
which maps cocartesian arrows to cocartesian arrows. Classically, these make up the morphisms of the slice-$\infty$-category of cocartesian fibrations over $B$.

We will recover this notion, as expected, as a special case of~\Cref{def:lari-fun}.

\begin{definition}[Cocartesian functor]
	Let $Q : A \to \UU$ and $P : B \to \UU$ be cocartesian families over Rezk types $A$ and $B$. A fiberwise map
	\[ \left \langle j:A \to B, \varphi : \prod_{a:A} Q(a) \to P(j\,a) \right \rangle \]
	is \emph{cocartesian} if it is an $i_0$-LARI functor.
\end{definition}

\subsubsection{Characterizations of cocartesian functors}

\begin{proposition}[Naturality of cocartesian functors]\label{prop:nat-cocartlift-arr}
	Let $A,B$ be Rezk types, $P:A \to \UU$, $Q:B \to \UU$ cocartesian families, and $\Phi \jdeq \pair{j}{\varphi}$ a cocartesian functor. Then $\Phi$ commutes with cocartesian lifts, \ie,~for any $u:\hom_B(a,b)$ there is an identification of arrows
	\[ \varphi \big(P_!(u,d)\big) =_{\Delta^1 \to (ju)^*Q} Q_!(ju,\varphi_ad) \]
	and hence of endpoints
	\[ \varphi_b(u_!^Pd) =_{Q(jb)} (ju)_!^Q(\varphi_ad). \]
	In particular there is a homotopy commutative square:
		\[
	\begin{tikzcd}
		P\,a \ar[rr, "\varphi_a"] \ar[d, "\coliftptfammap{P}{u}" swap] & & Q\,ja \ar[d, "\coliftptfammap{Q}{(ju)}"] \\
		P\,b \ar[rr, "\varphi_b" swap] && Q\,jb
	\end{tikzcd}
	\]
	\end{proposition}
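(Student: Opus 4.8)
The plan is to obtain this as the $i_0$-instance of the general naturality result \Cref{prop:nat-larifun}, followed by two elementary restrictions. By definition a cocartesian functor is precisely an $i_0$-LARI functor for the shape inclusion $i_0 : \unit \hookrightarrow \Delta^1$, and the base functor $j : A \to B$ here plays the role of the functor called $\alpha$ in \Cref{prop:nat-larifun}. First I would specialize the ambient shape inclusion of \Cref{prop:nat-larifun} to $i_0$, so that the sub-shape is $\unit$ and the total shape is $\Delta^1$. A datum $\angled{r,w,k}$ in the fibre product then unwinds to a source point $a : A$, an arrow $u : a \to_A b$ extending it, and a fibre element $d : P\,a$, while the accompanying ``lower'' base components collapse to identities (as flagged in the footnote to that statement). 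Under this dictionary — and remembering that here the domain family is $P$ and the codomain family is $Q$, opposite to the convention in \Cref{prop:nat-larifun} — the general identification of cells specializes to exactly
\[ \varphi\big(P_!(u,d)\big) =_{\Delta^1 \to (ju)^*Q} Q_!(ju,\varphi_a d), \]
both sides being dependent arrows over $ju$ in $Q$ with source $\varphi_a d$. All of the homotopical substance is carried by \Cref{prop:nat-larifun}, whose proof in turn rests on uniqueness of relative left adjoints (\Cref{cor:unique-left-rel-adj}): both $\varphi(P_!(u,d))$ and $Q_!(ju,\varphi_a d)$ are $i_0$-LARI cells over the same base data, hence canonically identified.

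For the identification of endpoints I would restrict the above path of $\Delta^1$-cells along the target inclusion $\{1\} \hookrightarrow \Delta^1$. Since $P_!(u,d)$ is the cocartesian lift $d \to^P_u u_!^P d$ and $\varphi$ is fibred over $j$, the target of $\varphi(P_!(u,d))$ is $\varphi_b(u_!^P d)$, whereas the target of $Q_!(ju,\varphi_a d)$ is $(ju)_!^Q(\varphi_a d)$ by construction. Evaluating the identification at the endpoint $1$ therefore yields the claimed
\[ \varphi_b(u_!^P d) =_{Q(jb)} (ju)_!^Q(\varphi_a d). \]

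Finally, the homotopy commutative square is a mere repackaging of this endpoint identification: letting $d$ range over $P\,a$ and invoking function extensionality, the family of paths above assembles into a homotopy between the two composites $P\,a \to Q(jb)$, namely $\varphi_b \circ u_!^P$ and $(ju)_!^Q \circ \varphi_a$. I expect the only delicate point to be the bookkeeping in the first step — matching the shape-indexed tuple $\angled{r,w,k}$ to the triple $\angled{a,u,d}$, checking that the suppressed base components genuinely reduce to identities, and keeping straight the swap of the roles of $P$ and $Q$ — together with the observation that $\varphi$, being fibrewise, commutes with passage to the target endpoint. Both are routine once the identifications are set up, and no homotopical input beyond \Cref{prop:nat-larifun} is needed.
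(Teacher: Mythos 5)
Your proposal is correct and follows the paper's own route: the paper proves this proposition precisely by observing it is the instance of \Cref{prop:nat-larifun} at the shape inclusion $i_0 : \unit \hookrightarrow \Delta^1$, which is exactly your first step. The additional details you supply — the dictionary between $\angled{r,w,k}$ and $\angled{a,u,d}$, the swap of the $P$/$Q$ naming conventions, restriction along $\{1\} \hookrightarrow \Delta^1$ for the endpoint identification, and function extensionality for the square — are exactly the routine unwindings the paper leaves implicit.
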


	\begin{proof}
		This follows since it is an instance of~\Cref{prop:nat-larifun}.
	\end{proof}

\begin{theorem}[{\protect\cite[Theorem 5.3.4]{RV21}}]\label{thm:cocart-fun-intl-char}
Let $A$ and $B$ be Rezk types, and consider cocartesian families $P:B \to \UU$ and $Q:A \to \UU$ with associated fibrations $\xi: F \fibarr  A$ and $\pi: E \fibarr B$, resp .

For a fibered functor $\Phi\defeq \pair{j}{\varphi}$ giving rise to a square
\[
	\begin{tikzcd}
		F \ar[r, "\varphi"] \ar[d, "\xi" swap] & E \ar[d, "\pi"] \\
		A \ar[r, "j" swap] & B
	\end{tikzcd}
\]
the following are equivalent:
\begin{enumerate}
	\item\label{it:cocart-fun-cocart} The fiberwise map $\Phi$ is a cocartesian functor.
	
	\item\label{it:cocart-fun-mate} The mate of the induced natural isomorphism is invertible, too:
\[\begin{tikzcd}
	{F^{\Delta^1}} && {E^{\Delta^1}} & {} & {F^{\Delta^1}} && {E^{\Delta^1}} \\
	{\xi \downarrow A} && {\pi \downarrow B} & {} & {\xi \downarrow A} && {\pi \downarrow B}
	\arrow["{r}"', from=1-1, to=2-1]
	\arrow["{\varphi \downarrow j}"', from=2-1, to=2-3]
	\arrow["{\varphi^{\Delta^1}}", from=1-1, to=1-3]
	\arrow["{r'}", from=1-3, to=2-3]
	\arrow["{\rightsquigarrow}" description, from=1-4, to=2-4, phantom, no head]
	\arrow[Rightarrow, "{=}", from=2-1, to=1-3, shorten <=7pt, shorten >=7pt]
	\arrow["{\ell}", from=2-5, to=1-5]
	\arrow["{\varphi \downarrow j}"', from=2-5, to=2-7]
	\arrow["{\varphi^{\Delta^1}}", from=1-5, to=1-7]
	\arrow["{\ell'}"', from=2-7, to=1-7]
	\arrow[Rightarrow, "{=}"', from=2-7, to=1-5, shorten <=7pt, shorten >=7pt]
\end{tikzcd}\]
	\item\label{it:cocart-fun-mate-fibered} The mate of the induced natural isomorphism, fibered over $j:A \to B$, is invertible, too:
	\[\begin{tikzcd}
		{F} && {E} & {} & {F} && {E} \\
		{\xi \downarrow A} && {\pi \downarrow B} & {} & {\xi \downarrow A} && {\pi \downarrow B}
		\arrow["{i}"', from=1-1, to=2-1]
		\arrow["{\varphi}", from=1-1, to=1-3]
		\arrow["{i'}", from=1-3, to=2-3]
		\arrow[Rightarrow, "{=}", from=2-1, to=1-3, shorten <=7pt, shorten >=7pt]
		\arrow["{\rightsquigarrow}" description, from=1-4, to=2-4, phantom, no head]
		\arrow["{\kappa}", from=2-5, to=1-5]
		\arrow["{\varphi}", from=1-5, to=1-7]
		\arrow["{\varphi \downarrow j}"', from=2-5, to=2-7]
		\arrow["{\kappa'}"', from=2-7, to=1-7]
		\arrow[Rightarrow, "{=}"', from=2-7, to=1-5, shorten <=7pt, shorten >=7pt]
		\arrow["{\varphi \downarrow j}"', from=2-1, to=2-3]
	\end{tikzcd}\]

\end{enumerate}

\end{theorem}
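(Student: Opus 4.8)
The plan is to obtain the equivalence of~(\ref{it:cocart-fun-cocart}) and~(\ref{it:cocart-fun-mate}) as the direct specialization of \Cref{thm:char-lari-fun} to the shape inclusion $i_0 : \unit \hookrightarrow \Delta^1$. Under this instantiation $\Phi \jdeq \unit$ and $\Psi \jdeq \Delta^1$, so that $F^\Phi \simeq F$ and $F^\Psi \simeq F^{\Delta^1}$, the pullback $F^\Phi \times_{A^\Phi} A^\Psi$ collapses to the comma object $\xi \downarrow A$, and likewise on the $E$-side. The left adjoint right inverse of $i_0 \cotens \pi$ furnished by \Cref{thm:lari-fams-lifting} is precisely the cocartesian lifting map $\chi$, so the mate square in~(\ref{it:cocart-fun-mate}) is exactly the mate square of \Cref{thm:char-lari-fun} read off at $j = i_0$. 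Since a cocartesian functor is by definition an $i_0$-LARI functor, the two conditions coincide with no further work.

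For the equivalence of~(\ref{it:cocart-fun-cocart}) and~(\ref{it:cocart-fun-mate-fibered}) a direct argument is needed, since the \emph{fibered} adjoint presentation used in~(\ref{it:cocart-fun-mate-fibered}) is not captured by the general $j$-LARI machinery of \Cref{sec:lari-stuff}. Here I would invoke the characterization of cocartesian families via the \emph{cocartesian transport map} $\tau$ (the fibered left adjoint of $\iota$), which is related to the lifting map by $\tau = \partial_1 \circ \chi$: the transport of $e$ along $u$ is the target of its cocartesian lift. Forming the fibered mate out of $\tau_Q$ and $\tau_P$ and running the collapse computation from the proof of \Cref{thm:char-lari-fun} --- now with the fibered counit in place of~\eqref{eq:counit-lari-fib} --- the pasted cell reduces to the comparison of endpoints $(ju)_!^P(\varphi\,d) \to \varphi\big(u_!^Q d\big)$. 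By \Cref{prop:nat-cocartlift-arr} (equivalently, by \Cref{prop:nat-larifun}) the invertibility of this endpoint comparison is logically equivalent to $\Phi$ commuting with cocartesian lifts, hence to $\Phi$ being a cocartesian functor. Combining the two equivalences gives the asserted three-way equivalence.

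The step I expect to be the main obstacle is the fibered mate computation in~(\ref{it:cocart-fun-mate-fibered}). Unlike the mate in~(\ref{it:cocart-fun-mate}), which is assembled from an ordinary LARI adjunction at the level of arrow types, the mate in~(\ref{it:cocart-fun-mate-fibered}) is built from the fibered left adjoints $\tau_Q$ and $\tau_P$ over $A$, resp.\ $B$, so the relevant counit must be recomputed in the fibered setting and one must check that the pasted cell still collapses to the endpoint comparison. The delicate points are verifying that $\tau$ agrees with $\partial_1 \circ \chi$ up to the required homotopies, and carefully tracking which ``lower'' base data may be taken to be identities, as flagged in \Cref{prop:nat-cocartlift-arr}; once these are in place, the reduction to \Cref{prop:nat-larifun} is formal.
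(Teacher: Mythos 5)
Your handling of the equivalence (\ref{it:cocart-fun-cocart})$\Leftrightarrow$(\ref{it:cocart-fun-mate}) is exactly the paper's: both specialize \Cref{thm:char-lari-fun} to $j \jdeq i_0 : \unit \hookrightarrow \Delta^1$, under which $F^\Phi \times_{A^\Phi} A^\Psi$ collapses to $\xi \downarrow A$ and the LARI supplied by \Cref{thm:lari-fams-lifting} is the cocartesian lifting map. For (\ref{it:cocart-fun-mate-fibered}) you genuinely diverge: the paper does not argue this equivalence at all but simply cites \cite[Theorem~5.3.19]{BW21}, whereas you sketch a direct proof --- identify the fibered left adjoint $\tau$ with $\partial_1 \circ \chi$, compute the fibered mate, observe that it collapses to the endpoint comparison $(ju)_!^P(\varphi\,d) \to \varphi\big(u_!^Q d\big)$, and conclude via \Cref{prop:nat-larifun}. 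In substance this reconstructs the cited proof from \cite{BW21}. The citation buys brevity and is consistent with the paper's stated policy that fibered-adjoint conditions fall outside the general $j$-LARI machinery of \Cref{sec:lari-stuff}; your route buys self-containedness and makes visible \emph{why} the fibered criterion holds, namely that transport is the endpoint of the lift. One point you should make explicit in a full write-up: \Cref{prop:nat-larifun} equates being a cocartesian functor with an \emph{identification of cells} $\varphi(Q_!(u,d)) = P_!(ju,\varphi\,d)$, while your mate computation produces a \emph{comparison morphism} between endpoints; translating ``the comparison map is invertible'' into ``the cells are identified'' uses Rezk completeness together with uniqueness of relative left adjoints (\Cref{cor:unique-left-rel-adj}). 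The paper makes the same silent move inside the proof of \Cref{thm:char-lari-fun}, so the step is legitimate, but it is exactly the kind of detail your flagged ``fibered counit'' computation has to pin down.
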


\begin{proof}
	The equivalence of Items~(\ref{it:cocart-fun-cocart}) and (\ref{it:cocart-fun-mate}) follows by~\Cref{thm:char-lari-fun}. The equivalence of~(\ref{it:cocart-fun-mate-fibered}) with the other two is proven in~\cite[Theorem~5.3.19]{BW21}.
\end{proof}

\phantomsection%
\printbibliography[heading=bibintoc]

\end{document}